 \def\newblock{\ }%
\tikzset{
	% space among nodes
	node distance=1.5cm,
	% normal state
	every node/.style={font=\small},
	every state/.style={fill=white,draw=black,thick,
		text=black,scale=1,inner sep=.3mm,minimum size=3.5mm},
	% absorbing state (like normal state but gray)
	abs/.style={state, fill=gray},
	% transition
	tr/.style={->,>=latex, thick,shorten >=0.5pt},
	% self loop below and above
	slb/.style={tr, loop below, out=300, in=240,min distance=0.75cm},
	sla/.style={tr, loop above, out=60, in=120, min distance=0.75cm}
}
\newtheorem{proposition}{Proposition}
\newtheorem{corollary}{Corollary}
\begin{document}
%%%%%%%%%%%%%%%%

% Outcomment only when entries are known. Otherwise leave as is and 
%   default values will be used.
%\setcounter{page}{1}
%\VOLUME{00}%
%\NO{0}%
%\MONTH{Xxxxx}% (month or a similar seasonal id)
%\YEAR{0000}% e.g., 2005
%\FIRSTPAGE{000}%
%\LASTPAGE{000}%
%\SHORTYEAR{00}% shortened year (two-digit)
%\ISSUE{0000} %
%\LONGFIRSTPAGE{0001} %
%\DOI{10.1287/xxxx.0000.0000}%

% Author's names for the running heads
% Sample depending on the number of authors;
% \RUNAUTHOR{Jones}
% \RUNAUTHOR{Jones and Wilson}
% \RUNAUTHOR{Jones, Miller, and Wilson}
% \RUNAUTHOR{Jones et al.} % for four or more authors
% Enter authors following the given pattern:
%\RUNAUTHOR{}

% Title or shortened title suitable for running heads. Sample:
% \RUNTITLE{Bundling Information Goods of Decreasing Value}
% Enter the (shortened) title:
%\RUNTITLE{}

% Full title. Sample:

\title{An Unconstrained Optimization Approach to Moment Fitting with Phase Type Distributions}

\author[inst1]{Eliran Sherzer}
\author[inst2]{Yehezkel S. Resheff}
\author[inst3]{Miklos Telek}

\address[inst1]{Industrial Engineering, Ariel University.}
\address[inst2]{Business school of the Hebrew University.}
\address[inst3]{Department of Networked Systems and Services,  Budapest University of Technology and Economics.}

\begin{abstract}
    Phase type (PH) distributions are widely used in modeling and simulation due to their generality and analytical properties. In such settings, it is often necessary to construct a PH distribution that aligns with real-world data by matching a set of prescribed moments. Existing approaches provide either exact closed-form solutions or iterative procedures that may yield exact or approximate results. However, these methods are limited to matching a small number of moments using PH distributions with a small number of phases, or are restricted to narrow subclasses within the PH family. We address the problem of approximately fitting a larger set of given moments using potentially large PH distributions. We introduce an optimization methodology that relies on a re-parametrization of the Markovian representation, formulated in a space that enables unconstrained optimization of the moment-matching objective. This reformulation allows us to scale to significantly larger PH distributions and capture higher moments. Results on a large and diverse set of moment targets show that the proposed method is, in the vast majority of cases, capable of fitting as many as $20$ moments to PH distributions with as many as $100$ phases, with small relative errors on the order of under $0.5\%$ from each target. We further demonstrate an application of the optimization framework where we search for a PH distribution that conforms not only to a given set of moments but also to a given shape. Finally, we illustrate the practical utility of this approach through a queueing application, presenting a case study that examines the influence of the $i^{\text{th}}$ moment of the inter-arrival and service time distributions on the steady-state probabilities of the $GI/GI/1$ queue length.
\end{abstract}

% % Fill in data. If unknown, outcomment the field
% \keywords{Phase type distributions, moments matching, moments fitting, optimization, constraint relaxation.}
% \HISTORY{}

%%%%%%%%%%%%%%%%%%%%%%%%%%%%%%%%%%%%%%%%%%%%%%%%%%%%%%%%%%%%%%%%%%%%%%

% Samples of sectioning (and labeling) in IJOC
% NOTE: (1) \section and \subsection do NOT end with a period
%       (2) \subsubsection and lower need end punctuation
%       (3) capitalization is as shown (title style).
%
%\section{Introduction.}\label{intro} %%1.
%\subsection{Duality and the Classical EOQ Problem.}\label{class-EOQ} %% 1.1.
%\subsection{Outline.}\label{outline1} %% 1.2.
%\subsubsection{Cyclic Schedules for the General Deterministic SMDP.}
%  \label{cyclic-schedules} %% 1.2.1
%\section{Problem Description.}\label{problemdescription} %% 2.
\maketitle
% Text of your paper here

\section{Introduction}

%\mt{Miklos to write:\\ Phase type distributions, their importance, their used,\\ introduction of the moments matching/fitting problem}

Stochastic models of real-life systems are hard to solve in general because real system behavior often results in complicated stochastic models that are both analytically and numerically intractable. One of the few exceptions is when the behavior of a system can be accurately modeled by a continuous-time Markov chain (CTMC). Unfortunately, this structure poses severe modeling constraints for the system's behavior. Namely, all stochastic activities in the system have to be independent and exponentially distributed, which is hardly the case in most real systems. Phase type (PH) distributions \cite{phbook} offer a way to relax this modeling limitation by describing the non-exponentially distributed stochastic activity with the help of a small Markov chain. The resulting stochastic model remains a CTMC, for which efficient numerical procedures are available. The key to applying this analysis approach is the approximation of non-exponential distributions with PH distributions. 

A vast literature deals with precisely how to obtain the PH parameters, where depending on the available information about the non-exponential distribution, the aim is to find a proxy distribution that provides an approximation either for the underlying distribution itself, or a sequence of its moments. In this work, we focus on the second case, but we provide a more general framework that also applies to distribution approximation. 

A justification for preferring moment matching over distribution approximation can be found, for instance, in queueing systems. In~\cite{sherzer23}, the authors use a deep learning approach and predict the stationary queue length of a $GI/GI/1$ queue. Notably, the input for the prediction relied solely on the first five moments of the inter-arrival and service time distributions. It is also demonstrated empirically that when the inter-arrival and service time distributions are not provided directly but instead through realizations sampled from those distributions, fitting moments proves significantly more effective than fitting the entire distribution for obtaining the steady-state probabilities of a $GI/GI/1$ queue. An additional example can be found in~\cite{SHERZER2024}, where the first inter-arrival and service time moments are used for predicting the transient queue length distribution of a $G(t)/GI/1$ queue.

PH distributions are helpful tools for reasoning about many stochastic models. For example, the $GI/GI/1$ model can be analyzed from data in the following way: (i) extract the first five or more moments of the inter-arrival and service time distributions. (ii) Fit a PH distribution for arrivals and service with respect to the moments from (i). Finally, (iii) the steady-state queue length distribution is derived via the analysis of the underlying Quasi Birth and Death (QBD) process (see Section 21.4 of~\cite{Harchol2013}), which can solve the resulting $PH/PH/1$ queue.  Obtaining a PH distribution from moments can be helpful also in more complex queueing systems~\cite{He2012} and other stochastic models, such as inventory problems~\cite{Manuel2007}. Even in stochastic models lacking analytical or numerical solutions, fitting a PH distribution to moments can be beneficial. This is due to the fact that PH distributions are straightforward to sample from, making simulation-based analysis a practical alternative.

Another advantage of using moments for fitting PH distributions is that moments are readily obtainable. In empirical settings, they can be directly estimated from data, while in analytical models, they are typically derivable from the underlying random variables. Moreover, in stochastic systems—such as inter-departure times in queues~\cite{Stanford1989}—the Laplace–Stieltjes Transform (LST) can often be derived, enabling straightforward computation of moments. This accessibility broadens the applicability of moment-based fitting models across a wide range of scenarios.

Fitting moments of PH distributions can also be used as a tool to study properties of stochastic models. For example, it enables the exploration of the impact of the $k^{\text{th}}$ moment on a stochastic model. This type of analysis is valuable both for gaining deeper insight into the stochastic model, and from a practical perspective for determining how many moments are necessary for accurate fitting under various conditions (e.g., queueing network structure, utilization levels, etc.). For the sake of demonstration, in Section~\ref{sec:queue}, we revisit the $PH/PH/1$ queueing model and compute the queue length distribution using fitted PH distributions based on the first 2, 3, 4, and 5 moments or inter-arrival and service times. We then compare these results to the distribution obtained from the original ground-truth model. This comparison allows us to assess the influence of each additional moment on system behavior under various conditions. While a comprehensive investigation of this effect is beyond the scope of this paper, we highlight the potential of our approach as a tool for such analyses.

The existing body of work on moment-based approximation has primarily focused on moment matching, where a predefined set of target moments—typically those of a non-exponential distribution—is exactly matched by a phase-type (PH) distribution. However, this exact matching approach limits scalability, as it generally supports matching fewer than 10 moments and often imposes constraints on the state-space of the resulting PH distribution~\cite{Johnson1991,Johnson01011989, HORVATH2009396}. By relaxing the requirement for exact matching, it becomes possible to fit a greater number of moments and to utilize any PH structure. This is naturally achieved by formulating the fitting task as a constrained optimization problem that seeks a PH representation consistent with a specified moment set. A heuristic approach in this spirit for the family of acyclic PH distributions was previously proposed in~\cite{buchholz2009heuristic}.

The key novelty in the current work lies in a reparameterization of the PH distribution, which transforms the constrained optimization problem into an unconstrained one. This enables us to accurately fit PH distributions with hundreds of phases to as many as 20 moments with deviations of less than one percent from the each of the target values. This method supports PH representations on the order of hundreds of states while maintaining practical runtimes—typically within minutes, and in the most complex cases tested, never exceeding a few hours.

The proposed method is not restricted to fitting moments. The optimization framework proposed is suitable also for fitting other statistical measures that are specified as differential functions of the parameters of the PH distribution, such as specific values of the PDF and the CDF. Furthermore, it can be applied simultaneously to fit moments and shape requirements. For example, it would be possible to fit the first eight moments, the PDF at some point $x_0$, and the CDF in the $50^{th}$ percentile. This is demonstrated in Section~\ref{sec:shape}.

The main contribution of this paper is as follows. First, we formulate a reparameterization of the PH distribution allowing unconstrained optimization of differential functions of the original Markovian parameters. We further suggest specific lower-dimensional reparameterizations in the same spirit for the special cases of Coxian and Hyper-Erlang distributions. Second, we apply the method to moment fitting and demonstrate the accuracy of the method in a large set of numerical experiments. We also demonstrate the application of the method for the task of fitting a PH with a given set of moments and a given shape. To the best of our knowledge, this is the first method to achieve this. An implementation of our approach can be found in the GitHub repository:  \url{https://github.com/Hezi-Resheff/ph-fit}. Finally, we propose and demonstrate an application of the moment fitting ability to the study of the effect of higher moment accuracy on queueing measures. 

%: NEED TO PHRASE THIS
%\begin{itemize}
%    \item reparameterization - making non-constrained
%    \item fitting both moments and other stuff
%    \item moment analysis
%\end{itemize}

% The main novelty of the current work is that we relax the requirement to set exact moments, and instead, we define measures of differences from the target value and optimize those differences, which is commonly referred to as \emph{fitting} in the PH literature. In order to do so, we develop a dedicated re-parametrization of the PH distribution and important sub-types thereof. By relaxing the requirement to achieve exact moment matching, we can fit a greater number of moments and accommodate larger PH sizes compared to existing solutions (hence gaining a broader range of distributions), all while maintaining practical runtimes. In this paper, we demonstrate cases where up to 20 moments are fitted to within a fraction of a percent of the target value, with PH sizes reaching 100, while maintaining practical runtimes ranging from just a few minutes and in all cases no more than up to a few hours.

The rest of the paper is organized as follows. In Section~\ref{sec:liter}, we present a literature review.  In Section~\ref{sec:PH-dist-about}, we shortly describe PH distribution properties. In Section~\ref{sec:opt_approach}, we present our optimization approach. In Section~\ref{sec:framework}, we delve into more technical details and describe how our approach is implemented. In Section~\ref{sec:experiment}, we describe experiments to test our method, and in Section~\ref{sec:results} we present the results. In Section~\ref{sec:shape} we demonstrate how the model can fit moments and shape simultaneously. In Section~\ref{sec:queue} we present a queueing application where we study the impact of the inter-arrival and service time $l^{th}$ moment on the queue length distribution. Finally, we conclude the paper in Section~\ref{sec:Conclusions}.

\section{Literature}\label{sec:liter}

The literature is rich with studies focused on obtaining a Markovian PH representation for a given set of moments. Notably, these studies mostly aim to obtain an exact solution (i.e., moment matching), or are limited to specific structures and small sizes. In contrast, our approach provides an approximation that is applicable to general PH distributions and scales to large sizes both in the number of phases and in the number of moments. In this section, we position our work within the existing literature and highlight that, while our method does not yield an exact solution, it introduces significant advancements in other key aspects.

Moment fitting and matching algorithms can be assessed based on four key criteria~\cite{OSOGAMI2006524}: (i) \textbf{Number of Moments Matched}: Generally, matching a larger number of moments is more favorable.
(ii) \textbf{Computational Efficiency}: Algorithms should be efficient, ideally with short execution times. (iii) \textbf{Generality of the Solution}: The algorithm should ideally be applicable to a broad range of distributions. (iv)
\textbf{Minimal Number of Phases}: The resulting matched PH distribution should have as few phases as possible while matching the target moments, to maintain simplicity and efficiency. It is critical at this point to distinguish between the positive property of finding a small PH that matches the desired target moments, from the negative property of being \textit{limited} to small PHs due to theoretical or computational limitations of a method. 

Numerous moment-matching algorithms have been proposed in prior research. While these methods excel in specific areas, they often fall short in at least one of the criteria mentioned above.

% When matching only the first two moments of PH distributions of size $N$ (PH($n$)), the seminal result of Aldous and Shepp \cite{10.1080/15326348708807067} characterizes the flexibility of the PH($n$) class. The minimal squared coefficient of variation (SCV) in the PH($n$) class is provided by the order $n$ Erlang distribution, and it is $1/n$ and 
% the SCV in the PH($2$) class unbounded from above. 
When matching only the first two moments is sufficient, solutions that perform well across criteria (ii), (iii) and (iv) can often be achieved. For example, Sauer and Chandy~\cite{5391223} provide a closed-form solution for matching the first two moments of a general distribution with a positive squared coefficient of variation (SCV), within the $PH(3)$ domain.  They employ a two-phase hyper-exponential distribution for $SCV > 1$ and a generalized Erlang distribution (which is an Erlang where the rates do not have to be equal)  for $SCV < 1$. Similarly,~\cite{10.1145/1009375.806155} presents a closed-form solution for matching the first two moments of a general distribution for  $SCV > 0$. Specifically, a two-phase Coxian$^+$ PH distribution is applied for $SCV>1$, and a generalized Erlang for distributions with $SCV <1$, where Coxian$^+$ is a Coxian distribution with no mass probability at zero.     

When the focus is restricted to matching a subset of distributions, solutions that excel in computational efficiency and minimality can be achieved. \cite{doi:10.1287/opre.30.1.125} and~\cite{Altiok01061985} focus on distributions with $SCV > 1$ and sufficiently high third moments. They derive closed-form solutions for matching the first three moments, with Whitt utilizing a two-phase hyper-exponential distribution and Altiok employing a two-phase Coxian$^+$  PH distribution.

Further contributions include methods that match three moments~\cite{Bobbio01012005, OSOGAMI2006524}.~\cite{Bobbio01012005} presents a technique for constructing minimal-order acyclic phase type (APH) distributions given the first three moments. \cite{OSOGAMI2006524} narrows the search space to Erlang–Coxian (EC) distributions, significantly improving computational efficiency.

\cite{Johnson1991,Johnson01011989} address the first three moments while emphasizing generality and minimality. They propose a closed-form solution for matching the first three moments of any distribution $G \in \text{PH}(3)$, using a Hyper-Erlang distribution with a common order, though their method requires $2~\text{OPT}(G) + 2$ phases in the worst case, where OPT(G) is defined as the minimum number of phases required in an acyclic PH distribution, which permits a probability mass at zero, to accurately represent the first three moments of a given distribution $G$. In subsequent work~\cite{Johnson01011990,Johnson01011990_a}, they extend their approach by utilizing three types of PH distributions: Hyper-Erlang distributions, a Coxian$^+$ PH distribution, and a general PH distribution. Their nearly minimal solution requires at most $\text{OPT}(G) + 2$ phases, though it involves solving computationally expensive nonlinear programming problems. In our fitting model, we rely on the three PH structures, namely, Hyper-Erlang, Coxian and a general PH.

For matching more than three moments,~\cite{HORVATH2009396} provide a method to find a  PH(3) distribution that matches the first five moments.  A notable advantage is its ability to produce minimal PH sizes, optimizing the phase count criterion. In contrast, our method is not restricted to the $PH(3)$ domain. 

For higher-order matching,~\cite{casale_et_al:DagSemProc.07461.8} construct hyper-exponential distributions of size $n$ based on $2n - 1$ moments. While this method fits more than five moments, its application is limited to a small subset of PH distributions, reducing generality. \cite{Horváth08052007} provide a numerical procedure for matching $2n - 1$ moments to an acyclic PH (APH) of size $n$, though it becomes impractical for $n > 8$. In another study~\cite{TELEK20071153}, the same authors propose a minimal-size PH distribution method that addresses all four criteria but struggles near the Markovian process boundary due to increased iteration requirements.

\cite{10.1007/978-3-642-39408-9_17} introduces generalized Hyper-Erlang distributions, obtaining all PH distributions matching target moments with a Markovian representation up to a specified size. While versatile, this approach is constrained by the computational difficulty of solving polynomial systems, limiting the number of moments it can match. \cite{buchholz2009heuristic,buchholz2014input} formulates an optimization-based approach for moment fitting for acyclic phase-type distributions. Their method is an alternating least squares approach, alternating between small constrained optimization problems for different partitions of the parameters until convergence. In our experiments, this method was successful only for low order acyclic PH distributions and for fitting a small number of moments (See Appendix), although we believe failure for larger sizes is probably due to numeric instability rather than an inherent limitation of the method. 

In summary, regarding criteria (i), (ii), and (iii), no existing method can achieve high-order moment matching for a wide range of distributions within a practical timeframe. By allowing for an approximate match, our method successfully addresses this challenge. However, with respect to criterion (iv), our method does not ensure a minimal number of phases, which we leave as an avenue for future research.

\section{PH distributions}
\label{sec:PH-dist-about}

PH distributions are a versatile family of probability distributions that model the time until absorption in a finite-state CTMC. They are defined over the non-negative real numbers and can approximate any non-negative distribution arbitrarily closely, making them highly flexible for modeling. A PH distribution is characterized by the pair \((\bm{\alpha}, \bm{T})\), where \( \bm{\alpha} \) is the initial probability vector, and \( \bm{T} \) is the subgenerator matrix defining transitions between transient states. These elements must satisfy specific constraints to ensure a valid PH distribution. A concise summary of the notation used in this paper can be found in the appendix (\ref{sec:appC}).

\textbf{Subgenerator Matrix}:
The matrix $\bm{T}  \in \mathbb{R}^{n \times n}$ describes the transitions between \( n \) transient states in the Markov chain. It must satisfy the following constraints: (a) Square Matrix:  
    \( \bm{T} \) is an \( n \times n \) square matrix, where \( n \) is the number of transient states.
(b) Off-Diagonal Elements (Transition Rates):  
    The off-diagonal elements \( T_{ij} \) must satisfy:
    \[
    T_{ij} \geq 0, \quad \text{for } i \neq j,
    \]
    representing the transition rates from state \( i \) to state \( j \).
 (c) Row Sums:  
    The row sums of \( \bm{T} \) must be non-positive:
    \[
    \sum_{j=1}^n T_{ij} \leq 0, \quad \forall i.
    \] This also implies that $T_{ii} < 0$, $\forall i$. (d) The subgenerator matrix must be non-singular. 
    
\textbf{Initial Probability Vector}:
The vector $\bm{\alpha}  \in \mathbb{R}^{1 \times n}$  specifies the starting probabilities in each transient state. It must be a probability vector satisfying: $  \alpha_i \geq 0$ and $    \sum_{i=1}^n \alpha_i = 1$. In total, a Markovian representation of a  PH distribution is defined as the pair $(\bm{\alpha}, \bm{T})$ as follows: 
\begin{equation}
\bm{\alpha}^T = 
\begin{bmatrix} \alpha_1 \\ \alpha_2 \\ \alpha_3 \\ \vdots \\ \alpha_n  \end{bmatrix},
\quad
\bm{T} = 
\begin{bmatrix}
T_{11} & T_{12} &  \cdots & T_{1n} \\
T_{21} & T_{22} &  \cdots & T_{2n} \\
\vdots & \vdots & \ddots & \vdots \\
T_{n1} & T_{n2} &  \cdots & T_{nn} \\
\end{bmatrix} 
\label{eq:parmetrization-1}
\end{equation}

\noindent where $\forall i: \alpha_i \geq 0$, $\forall i, j\neq i: T_{ij} \geq 0$, $\sum_i \alpha_i = 1$, $\forall i: \sum_j T_{ij} \leq 0$, and $\bm{T}$ is non-singular.

We note that whenever the row sums are strictly negative, $T$ is a strictly diagonally dominant matrix and is guaranteed to be non-singular by the Gershgorin Circle Theorem, leaving this constraint active only for the boundary cases that include a row sum of exactly zero. In practice, however, the non-singularity constraint does not appear to be a limiting factor in the numerical approaches and thus is omitted for the sake of simplicity in the rest of the paper. 

The $i$-th moment of a PH distribution with representation $(\bm{\alpha}, \bm{T})$ is computed as: 

$$ m_i = i! (-1)^i \bm{\alpha} \bm{T}^{-i} \mathds{1}_n $$

where $\mathds{1}_n$ is the length $n$ vector of ones. This property lends itself to moment fitting via direct optimization. In the next section, we formalize this idea. 

\section{An optimization approach to moment matching}\label{sec:opt_approach}
Consider the general $l$-moment matching objective for a PH distribution. That is, finding PH parameters $\bm{\alpha}, \bm{T}$, such that:

\begin{equation}
\label{eq:mom}
    \forall i\leq l: i! (-1)^i \bm{\alpha} \bm{T}^{-i} \mathds{1}_n \approx m_i, 
\end{equation}
\noindent here $m_i$ is the target $i-th$ moment. The best possible approximation for a PH($n$) of some chosen order $n$  could be obtained, in principle, by solving the weighted regression optimization problem:
\begin{align} 
\label{eq:opt-naive}
&\min_{ \bm{\alpha},\bm{T}}  \quad  \sum_{i=1}^{l} w_i \left( i! (-1)^i \bm{\alpha} \bm{T}^{-i} \mathds{1}_n - m_i \right)^2 \\ \nonumber
&\text{subject to:~} 
\forall i: \alpha_i \geq 0, \forall i,j\neq i: T_{ij} \geq 0, \sum_i \alpha_i = 1, \forall i: \sum_j T_{ij} \leq 0, det(\bm{T}) \ne 0 
\end{align}

\noindent where the weights $w_i$ for each of the terms allow to trade-off accuracy and account for the often expanding scale of the sequence of moments, for example by using a weight inversely proportional to the scale of each moment; setting $w_i = m_i^{-2}$ sums the square of the relative error of the moments. However, directly optimizing this constrained matrix-polynomial objective is hard. The core of the proposed method is designing constraint-free re-parametrizations for the PH distribution and some specific subtypes of it. These new parametrizations lead to simple equivalent optimization problems that are solvable by simple optimization methods such as gradient descent (GD). 

\subsection{The general PH re-parametrization}\label{sec:general_representation}

We begin with a general re-parametrization for the family of PH distributions of size $n$. The main idea is represent the PH in an (almost) unconstrained parameter space, with a differentiable mapping onto the properly constrained (Markovian) form of PH (see Section \ref{sec:PH-dist-about}). Such a form allows unconstrained optimization of any differentiable function of the original PH parameters with GD methods. The proposed general parametrization is as follows:
\begin{equation}
\bm{a}^T = 
\begin{bmatrix} a_1 \\ a_2 \\ a_3 \\ \vdots \\ a_n \\ \end{bmatrix},\quad
\bm{\gamma}^T = 
\begin{bmatrix} \gamma_1 \\ \gamma_2 \\ \gamma_3 \\ \vdots \\ \gamma_n \\ \end{bmatrix},\quad
\bm{Z} = 
\begin{bmatrix}
Z_{11} & Z_{12} & Z_{13} & \cdots & Z_{1n} \\
Z_{21} & Z_{22} & Z_{23} & \cdots & Z_{2n} \\
Z_{31} & Z_{32} & Z_{33} & \cdots & Z_{3n} \\
\vdots & \vdots & \vdots & \ddots & \vdots \\
Z_{n1} & Z_{n2} & Z_{n3} & \cdots &Z_{nn} \\
\end{bmatrix} ,
\label{eq:general-re-parmetrization}
\end{equation}
\noindent with $\bm{a}^T \in \mathbb{R}^n$, $\bm{\gamma} \in (\mathbb{R} \setminus \{0\})^n$, $\bm{Z} \in \mathbb{R}^{n^2}$.

\begin{proposition}
The image of $\; \mathbb{R}^n \times (\mathbb{R} \setminus \{0\})^n \times \mathbb{R}^{n^2}$ under the following differentiable transformation $\pi$ is the set of valid PH distributions of size $n$ in the Markovian representation  (See Eq. \eqref{eq:parmetrization-1}), with no zero elements. In other words, the general re-parametrization (Eq. \eqref{eq:general-re-parmetrization}) maps onto the interior of the standard Markovian form of PH via the following differentiable mapping~$\pi$:
\begin{equation}
\begin{aligned} 
     &\pi_{\alpha}(\bm{a}, \bm{\gamma}, \bm{Z}) = \mathit{softmax}(\bm{a}) , \\
     & \pi_{T}(\bm{a}, \bm{\gamma}, \bm{Z}) = \text{diag}(\bm{\gamma}^2) \cdot \left[ \mathit{softmax}(\bm{Z}) - \left( \bm{I}_n + \mathit{softmax}(\bm{Z}) \circ \bm{I}_n \right) \right],
\end{aligned}
\label{eq:parametrization-map}
\end{equation}
\noindent where $\text{diag}(\bm{\gamma}^2)$ is a diagonal matrix whose $i$-th diagonal element is $\gamma_i^2$, 
$\mathit{softmax}$ is the vector normalization: $\mathit{softmax}(\bm{v})_i = e^{v_i} / \sum_{j} e^{v_j}$, and is applied to matrices per row (that is, the softmax of a matrix is the matrix that has in its rows the softmax-normalized original rows), $\bm{I}_n$ is the order-n unit matrix, and $\circ$ is the elementwise matrix multiplication operator. 
\end{proposition}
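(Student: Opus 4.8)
The plan is to verify two things: first, that $\pi$ maps every point of the domain into the set of valid PH distributions with no zero entries (well-definedness of the image), and second, that every such interior PH representation arises as $\pi$ of some point (surjectivity onto the interior). Differentiability is immediate since $\mathit{softmax}$, squaring, and the elementwise/diagonal operations are all smooth, with $\mathit{softmax}$ well-defined everywhere on $\mathbb{R}^k$; the only subtlety is why $\bm{\gamma}$ must avoid $0$, which I address below. I would structure the argument as: (1) $\pi_\alpha$ analysis, (2) $\pi_T$ off-diagonal sign and positivity, (3) $\pi_T$ row sums, (4) non-singularity, (5) surjectivity.

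For step (1), $\mathit{softmax}(\bm a)$ has strictly positive entries summing to $1$, so $\pi_\alpha(\bm a)$ is a valid initial vector with no zero components, and conversely any strictly positive probability vector $\bm\alpha$ is hit because $\mathit{softmax}$ maps $\mathbb{R}^n$ onto the open simplex (e.g. take $a_i = \log \alpha_i$). For steps (2)--(3), write $S = \mathit{softmax}(\bm Z)$, so $S$ is row-stochastic with strictly positive entries. The bracketed matrix $B := S - (\bm I_n + S\circ \bm I_n)$ has off-diagonal entries $B_{ij} = S_{ij} > 0$ for $i\neq j$, and diagonal entries $B_{ii} = S_{ii} - 1 - S_{ii} = -1$. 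Hence each row of $B$ sums to $\big(\sum_j S_{ij}\big) - 1 - S_{ii} = -S_{ii} < 0$. Multiplying on the left by $\mathrm{diag}(\bm\gamma^2)$, whose diagonal entries $\gamma_i^2$ are strictly positive precisely because $\gamma_i\neq 0$, scales row $i$ by $\gamma_i^2 > 0$: this preserves the sign pattern, so $\pi_T(\bm a,\bm\gamma,\bm Z)_{ij} = \gamma_i^2 S_{ij} > 0$ for $i\neq j$, the diagonal entries equal $-\gamma_i^2 < 0$, and the $i$-th row sum is $-\gamma_i^2 S_{ii} < 0$ — strictly negative, which is why all the defining inequalities hold with no zero entries. (This also shows why $\gamma_i=0$ is excluded: it would produce an all-zero row, violating non-singularity and producing zero entries.) For step (4), each row of $\pi_T$ is strictly diagonally dominant (the strict negativity of the row sum just computed gives $|T_{ii}| = \gamma_i^2 > \gamma_i^2\sum_{j\neq i}S_{ij} = \sum_{j\neq i}|T_{ij}|$), so by the Gershgorin circle theorem $\pi_T$ is non-singular, exactly as remarked in Section~\ref{sec:PH-dist-about}.

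For surjectivity (step 5), take any interior Markovian pair $(\bm\alpha,\bm T)$ — strictly positive $\bm\alpha$, strictly positive off-diagonal $T_{ij}$, and strictly negative row sums (the "no zero elements" hypothesis forces the row sums to be strictly negative, since a zero row sum together with positive off-diagonal entries is fine but the construction targets the interior where $T_{ii} + \sum_{j\neq i}T_{ij} < 0$, i.e. the "escape rate" $-\sum_j T_{ij} > 0$). Recover the parameters explicitly: set $\gamma_i^2 = -\sum_j T_{ij} > 0$ (so choose $\gamma_i$ as either square root), then set $S_{ij} = T_{ij}/\gamma_i^2$ for $i\neq j$ and $S_{ii} = 1 + T_{ii}/\gamma_i^2$. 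One checks $S_{ii} = 1 + T_{ii}/\gamma_i^2 = (\gamma_i^2 + T_{ii})/\gamma_i^2 = \big(\sum_{j\neq i}T_{ij}\big)/\gamma_i^2 > 0$, and that the $i$-th row of $S$ sums to $\big(\sum_{j\neq i}T_{ij} + \gamma_i^2 + T_{ii}\big)/\gamma_i^2 = 1$, so $S$ is a strictly positive row-stochastic matrix and therefore lies in the image of $\mathit{softmax}$ applied row-wise (take $Z_{ij} = \log S_{ij}$); likewise $\bm a$ with $a_i = \log\alpha_i$. Substituting back reproduces $\pi_\alpha(\bm a,\bm\gamma,\bm Z) = \bm\alpha$ and $\pi_T(\bm a,\bm\gamma,\bm Z) = \mathrm{diag}(\bm\gamma^2)(S - \bm I_n - S\circ\bm I_n) = \bm T$.

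I expect the main obstacle to be purely expository rather than mathematical: stating precisely what "the interior of the standard Markovian form" means — in particular that it corresponds to strictly positive $\bm\alpha$, strictly positive off-diagonal $\bm T$, and strictly negative row sums — and making sure the claimed bijection-like correspondence is stated at the right level (the map is onto but not injective, owing to the sign ambiguity in $\gamma_i$ and the usual shift-invariance of $\mathit{softmax}$). The algebraic verifications in steps (1)--(4) are routine once the row-stochasticity of $\mathit{softmax}(\bm Z)$ and the bookkeeping of the $-1$ diagonal of $B$ are set up cleanly.
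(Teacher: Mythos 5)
Your forward direction (steps 1--4) is correct and matches the paper's argument, with the welcome extra precision that the row sums of $\pi_T$ are strictly negative, equal to $-\gamma_i^2 S_{ii}$, and that strict diagonal dominance gives non-singularity. The problem is in step 5, the surjectivity construction, which contains a genuine error. You set $\gamma_i^2 = -\sum_j T_{ij}$ (the exit rate), but the parametrization forces the diagonal of $\pi_T(\bm a,\bm\gamma,\bm Z)$ to be exactly $-\gamma_i^2$ -- a fact you yourself established in step 2. With your choice, the diagonal of the reconstructed matrix is $\sum_j T_{ij} = T_{ii} + \sum_{j\neq i} T_{ij} \neq T_{ii}$ whenever row $i$ has a positive off-diagonal entry, so substituting back does \emph{not} reproduce $\bm T$. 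The algebra that appears to make it work has two sign/normalization slips: with $\gamma_i^2 = -\sum_j T_{ij}$ one gets $S_{ii} = 1 + T_{ii}/\gamma_i^2 = -\bigl(\sum_{j\neq i} T_{ij}\bigr)/\gamma_i^2 < 0$ (not $>0$ as you claim, so $Z_{ii}=\log S_{ii}$ is undefined), and the $i$-th row of your $S$ sums to $1 + \bigl(\sum_j T_{ij}\bigr)/\gamma_i^2 = 0$, not $1$, so $S$ is not row-stochastic and $\mathit{softmax}(\log S)\neq S$ in any case.

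The fix is the choice the paper makes: take $\gamma_i = (-T_{ii})^{1/2}$, so that the diagonal is matched automatically, set $S_{ij} = T_{ij}/(-T_{ii})$ for $j\neq i$, and let the diagonal of $S$ absorb the slack, $S_{ii} = 1 - \sum_{j\neq i} T_{ij}/(-T_{ii}) = \bigl(-\sum_j T_{ij}\bigr)/(-T_{ii})$, which is strictly positive exactly when the row sum is strictly negative; then $\bm Z = \log S$, $\bm a = \log\bm\alpha$ gives $\pi_\alpha = \bm\alpha$ and $\pi_T = \bm T$. Your observation that surjectivity onto the ``interior'' requires strictly negative row sums is correct and worth keeping -- it is precisely the condition $S_{ii}>0$ above -- but as written your construction does not establish surjectivity, so the proposal has a real gap at this step.
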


\begin{proof}
\noindent Let $\bm{a}^T \in \mathbb{R}^n$, $\bm{\gamma} \in (\mathbb{R} \setminus \{0\})^n$, $\bm{Z} \in \mathbb{R}^{n^2}$. We verify that $\pi_{\alpha}(\bm{a}, \bm{\gamma}, \bm{Z}), \pi_{T}(\bm{a}, \bm{\gamma}, \bm{Z})$ is a valid PH in the Markovian representation. First, by the properties of the $\mathit{softmax}$ vector normalization function, $\pi_{\alpha}(\bm{a}, \bm{\gamma}, \bm{Z})$ contains only non-negative elements and sums to $1$ as required. Next, $\pi_{T}(\bm{a}, \bm{\gamma}, \bm{Z})$ has $-\bm{\gamma}_i^2$ in the $i$th diagonal element, and the non-diagonal $i,j$-th element is $\bm{\gamma}_i^2 \,\mathit{softmax}(\bm{Z})_{i,j}$, thus guaranteeing that the sum of rows of 
$\pi_{T}(\bm{a}, \bm{\gamma}, \bm{Z})$ are non-positive, since: $-\bm{\gamma}_i^2 + \bm{\gamma}_i^2 \cdot \sum_{j \neq i}^{n} \mathit{softmax}(\bm{Z})_{i,j} \leq 0$.

Conversely, let $\bm{\alpha}, \bm{T}$ be a PH in the Markovian representation with no zero elements. We use the property that elementwise-$\log$ is a right-inverse of $\mathit{softmax}$ over the domain of probability vectors with no zero elements.
Set $\bm{a} = \log(\bm{\alpha})$, $\bm{\gamma}_i = (-T_{i,i})^{\frac{1}{2}}$, 
$\bm{D}=diag(1/\bm{\gamma}^2) \bm{T}$,
$d_i=-\sum_{j,j\neq i} D_{ij}$,
$\bm{E}=\bm{D} +\bm{I} + diag(\bm{d})$,
and finally, set $\bm{Z}=\log(\bm{E})$.
%as the matrix with the $i^{th}$ row obtained by dividing the $i^{th}$ row of $\bm{T}$ by $\bm{\gamma}_i^2$, and replacing the $i^{th}$ element such that the sum of the row is $1$, then finally applying $\mathit{softmax}^{-1}$ to the resulting normalized vector.
Now by construction, $\pi_{\alpha}(\bm{a}, \bm{\gamma}, \bm{Z}) = \bm{\alpha}$, and $\pi_{T}(\bm{a}, \bm{\gamma}, \bm{Z}) = \bm{T}$. 
%Comment: by $\mathit{softmax}^{-1}(x)$, although it is not reversible, we mean a function that selects a vector such that $\forall x: \mathit{softmax}(\mathit{softmax}^{-1}(x)) = x$. Over the domain of probability vectors the $\log$ function is sufficient.  
\end{proof}

The restriction of the re-parametrization to map onto the interior, the set Markovian PH representations with no zero elements, is a technicality induced by the positivity of the softmax function, but has no implications for numerical computability. The proposed method is in its essence an approximation, and while these boundary points of the PH distribution are not reachable, points arbitrarily near them are. For example, consider an $\bm{\alpha}$ vector with the elements $[1, 0, 0]$. Setting $\bm{a} = [100, 1, 1]$ yields $\mathit{softmax}(\bm{a}) \approx [1 - 2\cdot10^{-43}, 10^{-43}, 10^{-43}]$, which for all \textit{numerical} intents and purposes is precisely the target of $[1, 0, 0]$.

\begin{corollary}
Let $f(\bm{\alpha}, \bm{T})$ be a differentiable scalar function of the Markovian PH parameters, then:
$$f_{\pi}(\bm{a}, \bm{\gamma}, \bm{Z}) = f(\pi_{\alpha}(\bm{a}, \bm{\gamma}, \bm{Z}), \pi_{T}(\bm{a}, \bm{\gamma}, \bm{Z}))$$ 
is a differentiable function of the parameters $\bm{a}, \bm{\gamma}, \bm{Z}$. Consequently, local extremum points of any such function $f$ can be found using GD in the space of the new parametrization $(\bm{a}, \bm{\gamma}, \bm{Z})$. Once a point is found in the new parametrization, it can be converted back to the Markovian parameter space using the mapping $\pi$. 
Every Extremum point of $f$ clearly corresponds to (possibly multiple) extremum points of $f_{\pi}$ (via the pre-image of $\pi$), but the opposite direction is not guaranteed. This implies that it may be necessary to sample many starting points for the GD process. In practice, our experiments show that useful points in the Markovian parametrization are relatively easy to obtain in this method.   

%\mt{is that sure that the point in the original space is an extreme point?} \Hezi{I am not sure yet. I think you are right and the transformation does induce additional spurious local minima.}

\end{corollary}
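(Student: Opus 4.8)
The statement bundles three things: differentiability of $f_{\pi}$, a forward correspondence of local extrema, and a cautionary remark; only the first two require an argument. For differentiability I would simply invoke the Proposition: $\pi=(\pi_{\alpha},\pi_{T})$ is differentiable — in fact $C^{\infty}$, being assembled from $\mathit{softmax}$, the map $\bm{\gamma}\mapsto\bm{\gamma}^{2}$, and affine maps — and by hypothesis $f$ is differentiable on the set of Markovian representations, which contains the image of $\pi$. Hence $f_{\pi}=f\circ\pi$ is differentiable as a composition, with gradient given by the chain rule, $\nabla f_{\pi}(\bm{a},\bm{\gamma},\bm{Z}) = D\pi(\bm{a},\bm{\gamma},\bm{Z})^{\top}\,\nabla f\!\left(\pi(\bm{a},\bm{\gamma},\bm{Z})\right)$ (this Jacobian–vector product being exactly what automatic differentiation evaluates in the implementation). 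Nothing subtle happens here.

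For the forward correspondence I would let $\mathcal{I}$ denote the image of $\pi$, i.e. the PH$(n)$ representations with no zero entries, which by the Proposition is precisely the (relative) interior of the PH$(n)$ parameter set. Take a local extremum $p^{\star}=(\bm{\alpha}^{\star},\bm{T}^{\star})\in\mathcal{I}$ of $f$ restricted to the PH set — say a local maximum, the minimum case being identical. Since $\mathcal{I}$ is open in the ambient affine space $\{\bm{\alpha}:\sum_{i}\alpha_{i}=1\}\times\mathbb{R}^{n^{2}}$, there is a genuine neighborhood $U$ of $p^{\star}$ on which $f\le f(p^{\star})$. Pick any $q^{\star}\in\pi^{-1}(p^{\star})$, which is nonempty by surjectivity of $\pi$ onto $\mathcal{I}$ (again the Proposition). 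By continuity of $\pi$, the set $\pi^{-1}(U)$ is an open neighborhood of $q^{\star}$, and for every $q$ in it $f_{\pi}(q)=f(\pi(q))\le f(p^{\star})=f_{\pi}(q^{\star})$; thus $q^{\star}$ is a local maximum of $f_{\pi}$. This shows that every local extremum of $f$ lying in $\mathcal{I}$ pulls back to a (possibly large) set of local extrema of $f_{\pi}$ of the same type, so that running GD on the unconstrained differentiable objective $f_{\pi}$ and pushing the result back through $\pi$ is a legitimate way to reach local extrema of $f$ over the interior of the PH set; the boundary cases are covered by the arbitrarily-close-approximation argument already made after the Proposition.

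For the caveat I would point out two things. First, $\pi$ is far from injective: $\mathit{softmax}$ is invariant under adding a constant to $\bm{a}$ and to each row of $\bm{Z}$, and $\pi_{T}$ depends on $\bm{\gamma}$ only through $\bm{\gamma}^{2}$, so each $p^{\star}$ carries a positive-dimensional (and sign-ambiguous) fibre of preimages. Second, and more relevant in practice, GD on $f_{\pi}$ converges only to \emph{critical} points, and since $f$ — equivalently the weighted regression objective of Eq.~\eqref{eq:opt-naive} — is non-convex, such a point may be a saddle or a suboptimal local extremum rather than the global optimum being sought; hence we do not assert a converse correspondence at the level needed to guarantee global optimality. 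The practical remedy, and the one the experiments bear out, is to initialize the GD from many random starting points in $(\bm{a},\bm{\gamma},\bm{Z})$-space and retain the best result.

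I expect the only real work to be in the second paragraph, and it is mild: the argument has to use both halves of the Proposition — surjectivity of $\pi$, to produce a preimage of the extremum at all, and differentiability/continuity of $\pi$, to transport it — and it must be stated carefully as a correspondence for extrema lying in the interior $\mathcal{I}$, with genuine boundary extrema of $f$ (zero entries, zero row sums) excluded and handled only in the limiting sense.
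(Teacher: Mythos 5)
Your proposal is correct and follows essentially the same route the paper takes implicitly (the paper states this corollary without a separate proof, relying on differentiability of the composition $f\circ\pi$ and the ``clear'' pull-back of extrema through the pre-image of $\pi$ established in the Proposition). Your version merely makes the chain-rule step and the open-neighborhood pull-back explicit, and is appropriately more careful than the paper in restricting the forward correspondence to extrema lying in the image of $\pi$ (the interior), with boundary points handled only in the limiting sense.
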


\begin{corollary}
The moment fitting weighted regression function $\sum_{i=1}^{l} w_i \left( i! (-1)^i\bm{\alpha} \bm{T}^{-i} \mathds{1}_n - m_i \right)^2$ is a differential function of $\bm{\alpha}, \bm{T}$, and hence can be optimized in the space of the new parametrization using GD. The resulting point in the new parameterization is then guaranteed to be mapped back to a valid PH distribution in the Markovian parametrization. 
\end{corollary}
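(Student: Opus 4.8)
The plan is to derive this corollary as essentially an immediate consequence of the preceding Corollary together with the moment formula established in Section~\ref{sec:PH-dist-about}. The only real content is to check that the weighted sum of squared residuals is genuinely a differentiable scalar function of the Markovian parameters $(\bm{\alpha}, \bm{T})$ on the domain where it is evaluated, and then to invoke the chain-rule/composition argument already made.

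\textbf{Step 1: differentiability of the objective in the Markovian parameters.} First I would recall that, by the moment identity $m_i = i!(-1)^i \bm{\alpha}\bm{T}^{-i}\mathds{1}_n$, each term $i!(-1)^i \bm{\alpha}\bm{T}^{-i}\mathds{1}_n$ is a composition of (a) the entrywise-polynomial map $(\bm{\alpha},\bm{T})\mapsto$ (the linear functional in $\bm{\alpha}$ and the matrix powers of $\bm{T}^{-1}$), and (b) matrix inversion $\bm{T}\mapsto \bm{T}^{-1}$, which is $C^\infty$ on the open set of invertible matrices (its differential is $-\bm{T}^{-1}(\cdot)\bm{T}^{-1}$ by Cramer's rule / the standard derivative of the inverse). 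Since a valid PH representation has non-singular $\bm{T}$, we are always on this open set, so each $m_i$-expression is smooth in $(\bm{\alpha},\bm{T})$. Then $f(\bm{\alpha},\bm{T}) = \sum_{i=1}^{l} w_i\left(i!(-1)^i\bm{\alpha}\bm{T}^{-i}\mathds{1}_n - m_i\right)^2$ is a finite sum of squares of such smooth scalar functions, with constant weights $w_i$ and constant targets $m_i$, hence itself a differentiable (indeed smooth) scalar function of $(\bm{\alpha},\bm{T})$.

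\textbf{Step 2: apply the composition corollary.} Having identified $f$ as a differentiable scalar function of the Markovian PH parameters, the previous Corollary applies verbatim: $f_\pi(\bm{a},\bm{\gamma},\bm{Z}) = f(\pi_\alpha(\bm{a},\bm{\gamma},\bm{Z}),\pi_T(\bm{a},\bm{\gamma},\bm{Z}))$ is a differentiable function of $(\bm{a},\bm{\gamma},\bm{Z})$ on $\mathbb{R}^n\times(\mathbb{R}\setminus\{0\})^n\times\mathbb{R}^{n^2}$, because $\pi$ is differentiable by the Proposition. Therefore GD in the $(\bm{a},\bm{\gamma},\bm{Z})$ space is well-defined and its iterates stay in this space. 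Finally, by the Proposition, any point $(\bm{a},\bm{\gamma},\bm{Z})$ maps under $\pi$ to a valid PH distribution in the Markovian representation (with no zero elements), which gives the last sentence of the claim.

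\textbf{Anticipated obstacle.} There is essentially no deep obstacle; the statement is a packaging of earlier results. The one point requiring a little care is to make sure the domain issue is handled cleanly: the residual functions are only smooth where $\bm{T}$ is invertible, so one should note explicitly that the image of $\pi_T$ consists of non-singular matrices (guaranteed since, as discussed after Eq.~\eqref{eq:parmetrization-1}, $\pi_T$ produces strictly diagonally dominant matrices with strictly negative diagonal, hence non-singular by Gershgorin), so $f_\pi$ is differentiable on the \emph{entire} unconstrained parameter space with no excluded points beyond $\bm{\gamma}_i \neq 0$. I would also remark that, as in the general Corollary, the correspondence of extrema is only one-directional, but that is already covered and need not be repeated.
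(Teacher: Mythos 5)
Your proposal is correct and follows essentially the same route the paper intends: the objective is a smooth function of $(\bm{\alpha},\bm{T})$ (matrix inversion being smooth on the non-singular set), so the preceding Corollary on composition with the differentiable map $\pi$ applies directly, and $\pi$ guarantees the result is a valid Markovian PH. Your explicit check that the image of $\pi_T$ is strictly diagonally dominant with strictly negative row sums (hence non-singular by Gershgorin) is a careful addition, but it matches the paper's own remark following Eq.~\eqref{eq:parmetrization-1} rather than constituting a different approach.
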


Using this parametrization, the moment matching optimization problem (Eq. \eqref{eq:opt-naive}) becomes: 

\begin{equation}
    \min_{\bm{a}, \bm{\gamma}, \bm{Z}}  \quad \sum_{i=1}^{l} w_i \left( i! (-1)^i \;\; \pi_{\alpha}(\bm{a}, \bm{\gamma}, \bm{Z}) \;\; \pi_{T}(\bm{a}, \bm{\gamma}, \bm{Z})^{-i} \;\; \mathds{1}_n \;\;- \;\; m_i \right)^2,
\label{eq:opt-unconstrained-genral}
\end{equation}
\noindent
with the only constraints being that all $\bm{\gamma}$ elements must be non-zero. This technical requirement of pointwise ineligibility has no practical implication in the GD optimization procedure so henceforth we consider this to be an unconstrained optimization problem. In the following we describe two additional re-parametrizations which limit the search to the Coxian and Hyper-Erlang subtypes of the PH distribution. 

\subsection{The Coxian re-parametrization}\label{sec:cox_representation}

The optimization problem defined in the previous section involves $\mathcal{O}(n^2)$ parameters for a general PH($n$) distribution. There are subclasses of PH($n$) distributions which can be described by much less, for example ($\mathcal{O}(n)$) parameters. These include for example PH distributions without a loop in the transition structure, referred to as acyclic PH distributions. These are known to have a Markovian representation with $2n-1$ parameters \cite{CUMANI}, as it is depicted in Figure \ref{fig:coxian}. This representation is characterized by 
\begin{equation}
\bm{\lambda} = 
\begin{bmatrix} \lambda_1 \\ \lambda_2 \\ \vdots \\ \lambda_n \\ 
\end{bmatrix}, \quad
\bm{p} = 
\begin{bmatrix} p_1 \\ p_2 \\ \vdots \\ p_{n-1} \\ 
\end{bmatrix}, 
\label{eq:parametrization-Coxian}
\end{equation}

where $\lambda_j>0$, $0\leq p_j<1$ for $\forall j\leq n$. Like for the general PH, the $i^{th}$moment of the Coxian distribution can be computed as: 
\begin{equation}
\label{eq:momcoxian}
   \mu^C_i(\bm{\lambda},\bm{p}) =
    i! (-1)^i \bm{\alpha} \bm{T}^{-i} \mathds{1}_n 
\end{equation}
assuming:
\begin{equation}
\begin{aligned} 
\bm{\alpha}^T &=     \begin{bmatrix} ~1~ \\  0 \\ 0 \\ \vdots \\ 0 \end{bmatrix}, \quad
\bm{T} = 
\begin{bmatrix} 
-\lambda_1 & p_1 \lambda_1  & 0 & \cdots & 0   \\ 
0 & -\lambda_2 & p_2\lambda_2  & \ddots & \vdots    \\ 
\vdots & \ddots & \ddots & \ddots & 0  \\
\vdots & 0 & \ddots & -\lambda_{n-1} & p_{n-1}\lambda_{n-1}   \\ 
0 & \cdots & \cdots & 0 & -\lambda_n     
\end{bmatrix}.
\end{aligned}
\label{eq:CoxianPH}
\end{equation}

The potential advantage of using special subclasses of PH distributions is the more efficient optimization with less parameters to optimize over, and its potential disadvantage is the reduced class of distributions in the search space. It is not known in advance which one of these two effects is dominant, and we investigate this question numerically in Section \ref{sec:experiment}.

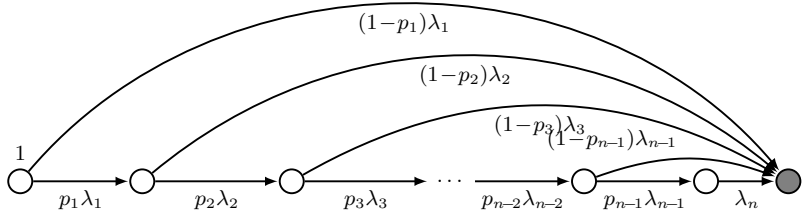
\begin{figure}
\centering
\resizebox{0.7\textwidth}{!}{
\begin{tikzpicture}[node distance=1.2cm]
\node[state,label=above:{$1$}] (S1) {};
\node[state,label=above:{$ $}] (S2) [right =14mm of  S1] {};
\node[state,label=above:{$ $}] (S3) [right =18mm of S2] {};
\node (S4) [right  =18mm of S3] {$\dots$};
\node[state,label=above:{$ $}] (S5) [right =14mm of S4] {};
\node[state,label=above:{$ $}] (S6) [right =14mm of S5] {};
\node[abs] (S0) [right of=S6] {};

\draw[tr] (S1) edge  node[below] {$p_1\lambda_1$} (S2);
\draw[tr] (S2) edge  node[below] {$p_2\lambda_2$} (S3);
\draw[tr] (S3) edge  node[below] {$p_3\lambda_3$} (S4);
\draw[tr] (S4) edge  node[below] {$p_{n\!-\!2}\lambda_{n\!-\!2}$} (S5);
\draw[tr] (S5) edge  node[below] {$p_{n\!-\!1}\lambda_{n\!-\!1}$} (S6);
\draw[tr] (S6) edge  node[below] {$\lambda_{n}$} (S0);

\draw[tr] (S1) edge  [bend left=50]    node[below] {$(1\!-\!p_1)\lambda_1$} (S0);
\draw[tr] (S2) edge  [bend left=40]    node[below] {$(1\!-\!p_2)\lambda_2$} (S0);
\draw[tr] (S3) edge  [bend left=30]    node[below] {$(1\!-\!p_3)\lambda_3$} (S0);
\draw[tr] (S5) edge  [bend left=20]    node[above left] {$(1\!-\!p_{n\!-\!1})\lambda_{n\!-\!1}$} (S0);
\end{tikzpicture}
}	
\caption{Coxian distribution, it is referred to as CF3 structure in \cite{CUMANI}}
\label{fig:coxian}
\end{figure}

For unconstrained optimization, we re-parametrize the Coxian distribution of size $n$, still using $2n-1$ parameters, as follows:
\begin{equation}
 \bm{\gamma} = 
\begin{bmatrix} \gamma_1 \\ \gamma_2 \\ \vdots \\ \gamma_n \\ 
\end{bmatrix}, \quad
\bm{u} = 
\begin{bmatrix} u_1 \\ u_2 \\ \vdots \\ u_{n-1} \\ 
\end{bmatrix},
\label{eq:re-parametrization-Coxian}
\end{equation}

\noindent with $\bm{\gamma} \in (\mathbb{R} \setminus \{0\})^n, \bm{u} \in \mathbb{R}^{n-1}$.

This is mapped onto the set of degree $n$ Coxian via the following transformation $\phi$:
\begin{equation}
    \bm{\lambda} = \phi_{\lambda}(\bm{\gamma}, \bm{u}) = \bm{\gamma^2}, \quad \bm{p} = \phi_{p}(\bm{\gamma}, \bm{u}) =    \sigma(\bm{u}),
\label{eq:parametrization-map-Coxian}
\end{equation}
\noindent where $\bm{\gamma^2}$ is computed elementwise, 
$\sigma$ is the sigmoid function, $\sigma(z) = 1 / (1 + e^{-z})$, and is used to map elements of $\bm{u}$ to the interval $(0, 1)$. As in the case of the softmax function, the point zero is not achievable with sigmoid, but points arbitrarily near it are, which is sufficient for numerical purposes.

\begin{proposition}
The image of $(\mathbb{R} \setminus \{0\})^n \times \mathbb{R}^{n-1}$ under $\phi$ is the set of valid Coxian distributions in the Markovian representation. That is, $\phi$ maps the parametrization (\ref{eq:re-parametrization-Coxian}) onto the set of Coxian distributions in the Markovian representation.
\end{proposition}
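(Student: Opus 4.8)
The plan is to mirror the structure of the proof of the general PH re-parametrization proposition: establish the two inclusions separately, first showing that every point in the domain maps under $\phi$ to a valid Coxian in the Markovian representation, and then exhibiting an explicit right-inverse of $\phi$ defined on the set of Coxian distributions.

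First I would verify the forward direction. Given $\bm{\gamma} \in (\mathbb{R} \setminus \{0\})^n$ and $\bm{u} \in \mathbb{R}^{n-1}$, set $\bm{\lambda} = \phi_{\lambda}(\bm{\gamma}, \bm{u}) = \bm{\gamma}^2$ and $\bm{p} = \phi_p(\bm{\gamma}, \bm{u}) = \sigma(\bm{u})$. Since each $\gamma_i \neq 0$, we have $\lambda_i = \gamma_i^2 > 0$ for all $i$, and since the sigmoid $\sigma$ takes values in the open interval $(0,1)$, we have $0 < p_j < 1$ for all $j \leq n-1$. Plugging these into the Coxian template of Eq.~\eqref{eq:CoxianPH} yields a bidiagonal $\bm{T}$ whose diagonal entries $-\lambda_i$ are strictly negative, whose off-diagonal entries $p_i \lambda_i$ are strictly positive, and whose row sums are $-\lambda_i + p_i\lambda_i = -(1-p_i)\lambda_i < 0$ for $i < n$ and $-\lambda_n < 0$ for the last row. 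Together with the fixed $\bm{\alpha} = (1,0,\ldots,0)^T$, this is a valid PH distribution in the Markovian representation, and it is by construction a Coxian. Hence the image of $\phi$ is contained in the set of Coxian distributions.

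Conversely, I would show every Coxian in the Markovian representation (as parametrized by Eq.~\eqref{eq:parametrization-Coxian}, i.e.\ by a pair $(\bm{\lambda}, \bm{p})$ with $\lambda_j > 0$ and $0 \leq p_j < 1$) lies in the image. Given such a pair, define $\gamma_i = \lambda_i^{1/2}$ for each $i$ — well-defined and nonzero since $\lambda_i > 0$ — and $u_j = \sigma^{-1}(p_j) = \log\!\bigl(p_j / (1 - p_j)\bigr)$ for each $j \leq n-1$. Here I would note the one genuine technicality, entirely parallel to the softmax technicality flagged after the earlier proposition: the logit $\sigma^{-1}$ is defined only on the open interval $(0,1)$, so the boundary case $p_j = 0$ is not attained exactly. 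As the authors already argue, this has no numerical consequence, since $\sigma(u_j) \to 0$ as $u_j \to -\infty$; thus the image of $\phi$ is precisely the set of Coxian distributions with all $p_j$ strictly in $(0,1)$, and points arbitrarily close to the $p_j = 0$ boundary are reachable. One should state the proposition's conclusion with this same "interior" caveat that the earlier proposition carries. Finally, by construction $\phi_{\lambda}(\bm{\gamma}, \bm{u}) = \bm{\gamma}^2 = \bm{\lambda}$ and $\phi_p(\bm{\gamma}, \bm{u}) = \sigma(\bm{u}) = \bm{p}$, so $\phi$ composed with this assignment is the identity on the Coxian set, completing the surjectivity argument.

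There is no real obstacle here — the map $\phi$ acts coordinatewise, the two building blocks ($z \mapsto z^2$ onto $\mathbb{R}_{>0}$ from $\mathbb{R}\setminus\{0\}$, and $\sigma$ onto $(0,1)$ from $\mathbb{R}$) are elementary, and differentiability of $\phi$ is immediate. The only point deserving care is being precise about the boundary: the statement as written claims the image is "the set of valid Coxian distributions," whereas strictly it is the subset with $p_j \in (0,1)$ for all $j$; I would phrase the proof to match the convention the authors adopted for the general case, treating the excluded measure-zero boundary as a harmless technicality for the numerical optimization.
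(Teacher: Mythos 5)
Your proof is correct and follows essentially the same route as the paper's: verify coordinatewise that $\bm{\gamma}^2$ gives valid rates and $\sigma(\bm{u})$ valid probabilities, then invert via $\bm{\gamma}=\sqrt{\bm{\lambda}}$, $\bm{u}=\sigma^{-1}(\bm{p})$. Your explicit handling of the $p_j=0$ boundary (where $\sigma^{-1}$ is undefined) is in fact slightly more careful than the paper's own proof, which glosses over that interior caveat here even though it flags the analogous softmax issue for the general parametrization.
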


\begin{proof}
\noindent Let  $\bm{\gamma} \in (\mathbb{R} \setminus \{0\})^n, ~\bm{u} \in \mathbb{R}^{n-1}$. First, it is readable that $\bm{\gamma^2}$ is a valid intensity vector and $\sigma(\bm{u})$ is a vector of probabilities. 
Conversely, let $\bm{\lambda}, \bm{p}$ be a Coxian distribution in the Markovian representation. Then the $\phi$ mapping of  $\bm{\gamma} = \sqrt{\bm{\lambda}}$ and 
$\bm{u} = \sigma^{-1}(\bm{p})$ gives $\bm{\lambda}$ and $\bm{p}$.
\end{proof}

Using this parametrization, the moment-fitting weighted regression optimization problem (Eq. \eqref{eq:opt-naive}), restricted to the set of Coxian distributions with size $n$ becomes: 
\begin{equation}
    \min_{\bm{\gamma}, \bm{u}}  \quad \sum_{i=1}^{l} w_i \left( 
     \mu^C_i(\phi_{\lambda}(\bm{\gamma}, \bm{u}),\phi_{p}(\bm{\gamma}, \bm{u})) \;\;- \;\; m_i \right)^2.
\label{eq:opt-unconstrained-Coxian}
\end{equation}

\subsection{The Hyper-Erlang re-parametrization}\label{sec:mix_erlang_representation}

Hyper-Erlang distributions of order $n$ are a subset of acyclic PH distributions of order $n$ with $\mathcal{O}(k)$ parameters where $k<n$. That is Hyper-Erlang distributions further reduces the number of model parameters and further restricts the set of considered distributions compared to Coxian distributions of order $n$. A Hyper-Erlang distribution of order $n$ is composed by 
the mixture of $k$ Erlang distributions with sizes $d_1, ..., d_k$ where $n = \sum_i d_i$, as it is depicted in Figure \ref{fig:hypererlang}.

\begin{figure}
\centering
\begin{tikzpicture}
\node[state,label=left:{$\omega_1$}] (S10) {};
\foreach \from/\to in {S10/S11,S11/S12,S12/S13}
\node[state] (\to) [right =4.1mm of \from] {};

\node[state,label=left:{$\omega_2$}] (S20) [below =4mm of S10] {};
\node (S30) [below =4mm of  S20] {$\dots$};
\node[state,label=left:{$\omega_k$}] (S40) [below =4mm of S30] {};
\foreach \from/\to in {S40/S41,S41/S42}
\node[state] (\to) [right =4.1mm of \from] {};

\node[abs] (S0) [right =32.1mm of S30] {};

\foreach \from/\to in {S10/S11,S11/S12,S12/S13}
\draw[tr] (\from) edge  node[above =2mm] {$\lambda_1$} (\to);
\draw[tr] (S13) edge  node[above right =0mm] {$\lambda_1$} (S0);

\draw[tr] (S20) edge  node[above = 0mm] {$\lambda_2$} (S0);

\foreach \from/\to in {S40/S41,S41/S42}
\draw[tr] (\from) edge  node[below] {$\lambda_k$} (\to);
\draw[tr] (S42) edge  node[below =1mm] {$\lambda_k$} (S0);
\end{tikzpicture}
\caption{Hyper-Erlang distribution with $d_1=4$, $d_2=1$, \ldots, $d_k=3$.}
\label{fig:hypererlang}
\end{figure}
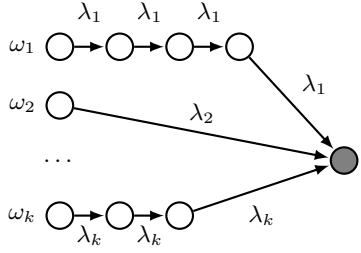

This representation is characterized by 
\begin{equation}
\bm{\omega} = 
\begin{bmatrix} \omega_1 \\ \omega_2 \\ \vdots \\ \omega_{k}  
\end{bmatrix}, \quad
\bm{\lambda} = 
\begin{bmatrix} \lambda_1 \\ \lambda_2 \\ \vdots \\ \lambda_k  
\end{bmatrix}, 
\label{eq:parametrization-Erlang}
\end{equation}
where $\lambda_j>0$, $0\leq \omega_j<1$ for $\forall j\leq k$, and
$\sum_{j=1}^k \omega_j =1$. The $i$-th moment
of the Hyper-Erlang distribution with sizes $d_1, ..., d_k$ and representation  $\bm{\omega},\bm{\lambda}$
can be computed as 
\begin{equation}
\label{eq:momerlang}
\mu^H_i(\bm{\omega},\bm{\lambda}) =
i! (-1)^i \bm{\alpha} \bm{T}^{-i} \mathds{1}_n 
\end{equation}
assuming
\begin{equation}
\begin{aligned} 
\bm{\alpha} &=   
\big[
\omega_1 \;,\;
 \underbrace{0, \cdots, 0}_{d_1-1} 
\;,\; \omega_2 \;,\;
\underbrace{0, \cdots, 0}_{d_2-1},
\cdots, \; \omega_k \;,\; 
\underbrace{0, \cdots, 0}_{d_k-1}
\big], 
\\
\bm{T} &= ~
\begin{array}{|ccccccc|c}
-\bm{\lambda}^2_1 & \bm{\lambda}^2_1 & 0 & \cdots & \cdots & \cdots & 0 & 1  \\ 
0 & \ddots & \ddots & \ddots &  &  & \vdots  &  \vdots\\ 
\vdots & \ddots & -\bm{\lambda}^2_1 & \bm{\lambda}^2_1 & \ddots &  & \vdots  & d_1 \\
\vdots &  & \ddots & \ddots & \ddots & \ddots & \vdots &  \vdots \\ 
\vdots &  &  & \ddots & -\bm{\lambda}^2_k & \bm{\lambda}^2_k & 0 & n\!-\!d_k\!+\!1\\ 
\vdots &  &  &  & \ddots & \ddots & \ddots &  \vdots \\ 
0 & \cdots & \cdots & \cdots & \cdots & 0  & -\bm{\lambda}^2_k & n \\ 
\end{array}
\end{aligned}
\label{eq:parametrization-matrix-Erlang}
\end{equation}
where the last column indicate the row indexes. 

For the purpose of unconstrained optimization, the Hyper-Erlang distribution with sizes $d_1, ..., d_k$ (where $n = \sum_i d_i$) is re-parametrized as follows:
\begin{equation}
\bm{\beta} = 
\begin{bmatrix} \beta_1 \\ \beta_2 \\\vdots \\ \beta_k \end{bmatrix},
\quad 
\bm{\delta} = 
\begin{bmatrix} 
\delta_1 \\ \delta_2 \\ \vdots \\ \delta_k  
\end{bmatrix}, 
\label{eq:re-parametrization-Erlang}
\end{equation}

\noindent where $\bm{\beta} \in \mathbb{R}^k$ relates to the probability of choosing each Erlang distribution, and $\bm{\delta} \in (\mathbb{R} \setminus \{0\})^k$ is related to the rate in each of the Erlang distributions. This is mapped to a 
Hyper-Erlang distribution with sizes $d_1, ..., d_k$
via transformation $\psi$ as:
\begin{equation}
\begin{aligned} 
    \bm{\omega} = \psi_{\omega}(\bm{\beta}, \bm{\delta}) =\mathit{softmax}(\bm{\beta}), \quad  
    \bm{\lambda} = \psi_{\lambda}(\bm{\beta}, \bm{\delta}) = \bm{\delta}^2. 
\end{aligned}
\label{eq:parametrization-map-Erlang}
\end{equation}

\begin{proposition}
The image of $\; \mathbb{R}^k \times (\mathbb{R} \setminus \{0\})^k$ under $\psi$ is the set of valid Hyper-Erlang distributions in the Markovian representation. That is, $\psi$ maps the parametrization (\ref{eq:parametrization-Erlang}) onto the set of Hyper-Erlang distributions in the Markovian representation. 
\end{proposition}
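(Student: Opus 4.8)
The plan is to follow the same two-directional argument used for the general PH and Coxian propositions: first show that every point in the parameter space $\mathbb{R}^k \times (\mathbb{R}\setminus\{0\})^k$ maps under $\psi$ to a valid Hyper-Erlang distribution in the Markovian representation, and then show that every Hyper-Erlang distribution in the Markovian representation (with sizes $d_1,\dots,d_k$) arises as such an image, i.e.\ that $\psi$ is onto.

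\textbf{Forward direction.} Fix $\bm{\beta}\in\mathbb{R}^k$ and $\bm{\delta}\in(\mathbb{R}\setminus\{0\})^k$. By the basic properties of $\mathit{softmax}$, the vector $\bm{\omega}=\mathit{softmax}(\bm{\beta})$ has strictly positive entries summing to $1$, so it is a valid mixing probability vector (in particular each $\omega_j$ satisfies $0\le\omega_j<1$, since $k\ge 2$; the degenerate case $k=1$ gives $\omega_1=1$ and is handled trivially). Since each $\delta_j\neq 0$, we have $\lambda_j=\delta_j^2>0$, so $\bm{\lambda}$ is a valid vector of Erlang rates. Plugging these into the structured pair $(\bm{\alpha},\bm{T})$ of Eq.~\eqref{eq:parametrization-matrix-Erlang} yields exactly the Markovian representation of the Hyper-Erlang distribution with component sizes $d_1,\dots,d_k$: $\bm{\alpha}$ is a probability vector because its nonzero entries are the $\omega_j$, and $\bm{T}$ is a valid subgenerator because each diagonal block is the subgenerator of an Erlang($d_i$,$\lambda_i$) chain (diagonal entries $-\lambda_i^2<0$, super-diagonal entries $\lambda_i^2>0$, and the row sums are zero except in the last row of each block, where they are negative). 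Non-singularity follows since $\bm{T}$ is block upper-triangular with nonzero diagonal. Hence the image of $\psi$ lands inside the set of Hyper-Erlang distributions with the prescribed sizes.

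\textbf{Converse (surjectivity).} Conversely, let $(\bm{\omega},\bm{\lambda})$ be any Hyper-Erlang distribution in the Markovian representation with sizes $d_1,\dots,d_k$, so $\omega_j\ge 0$, $\sum_j\omega_j=1$, and $\lambda_j>0$. Choose $\bm{\delta}$ by $\delta_j=\sqrt{\lambda_j}$, which is well-defined and nonzero since $\lambda_j>0$, giving $\psi_{\lambda}(\bm{\beta},\bm{\delta})=\bm{\delta}^2=\bm{\lambda}$. For the mixing vector, recall from the proof of the general-PH proposition that elementwise $\log$ is a right inverse of $\mathit{softmax}$ on strictly positive probability vectors; set $\bm{\beta}=\log(\bm{\omega})$ so that $\psi_{\omega}(\bm{\beta},\bm{\delta})=\mathit{softmax}(\bm{\beta})=\bm{\omega}$. (If some $\omega_j=0$ this exact preimage fails, exactly as in the general and Coxian cases; as the authors note, such boundary points are limits of images of $\psi$ and are irrelevant for the numerical optimization, so the statement is understood as mapping onto the interior, or one simply restricts to strictly positive $\bm{\omega}$.) This exhibits the required preimage, completing the proof.

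\textbf{Main obstacle.} There is no real obstacle here; the argument is a routine specialization of the earlier two propositions, and the only subtlety worth a sentence is the same boundary technicality (softmax and the square never produce exact zeros), which the paper has already addressed in the discussion following the general-PH proposition and which I would simply reference rather than re-derive. I would also briefly note differentiability of $\psi$ (immediate, since $\mathit{softmax}$ and squaring are smooth), so that the corollaries about GD optimization carry over, although strictly speaking the proposition as stated only asserts the image equality.
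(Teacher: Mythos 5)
Your proof is correct and follows essentially the same two-directional argument as the paper: softmax and elementwise squaring give a valid $(\bm{\omega},\bm{\lambda})$, and conversely $\bm{\beta}=\log(\bm{\omega})$, $\bm{\delta}=\sqrt{\bm{\lambda}}$ provide a preimage. Your version is in fact slightly more careful than the paper's, since you verify the subgenerator structure explicitly and flag the zero-component boundary issue (which the paper only addresses in the general-PH case), but the substance is identical.
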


\begin{proof}
Let $\bm{\beta} \in \mathbb{R}^k, \bm{\delta} \in (\mathbb{R} \setminus \{0\})^k$. The vector 
$\bm{\omega} = \psi_{\omega}(\bm{\beta}, \bm{\delta}) =\mathit{softmax}(\bm{\beta})$
contains only non-negative elements and sums to $1$, 
and the vector
$\bm{\lambda} = \psi_{\lambda}(\bm{\beta}, \bm{\delta}) = \bm{\delta}^2$ contains only positive elements. 
Conversely, let $\bm{\omega}, \bm{\lambda}$ be a
Hyper-Erlang distributions in the Markovian representation. Then the $\psi$ mapping of  
$\bm{\beta} = \log(\bm{\omega})$ and $\bm{\delta} = \sqrt{\bm{\lambda}}$ gives $\bm{\omega}$ and $\bm{\lambda}$.
\end{proof}

Using this parametrization, the moment-matching weighted regression optimization problem (\ref{eq:opt-naive}), restricted to the set of Hyper-Erlang distributions with sizes $d_1, ..., d_k$ becomes: 
\begin{equation}
\min_{\bm{\beta}, \bm{\delta}}  \quad \sum_{i=1}^{l} w_i \left(\mu^H_i(\psi_{\lambda}(\bm{\beta}, \bm{\delta}),\psi_{p}(\bm{\beta}, \bm{\delta})) \;\;- \;\; m_i \right)^2.
\label{eq:opt-unconstrained-Erlang}
\end{equation}

\subsection{Beyond moment matching}\label{sec:beyond_moms}
The framework outlined above is useful for more general PH fitting problems as long as the fitting problem can be specified as differentiable functions over the parameters of the Markovian parametrization. Here we demonstrate the approach for general PH($n$) distributions, but it can be similarly applied for Coxian and Hyper-Erlang distributions as well following the same pattern of moment fitting in the previous sections. 
Let:
\begin{equation}
%q_i = 
    f_i(\bm{\alpha}, \bm{T}) \quad \text{for} ~~i = 1, \ldots, l
\end{equation}
be differentiable functions describing properties of PH distribution with corresponding desired targets $g_i$. This leads to the following constrained weighted optimization problem:
\begin{align} 
\label{eq:opt-general-f-naive}
&\min_{ \bm{\alpha},\bm{T}}  \quad  \sum_{i=1}^{l} w_i \left( f_i(\bm{\alpha}, \bm{T}) - g_i \right)^2 \\ \nonumber
&\text{subject to:~} 
\forall i: \alpha_i \geq 0, \forall i,j \neq i: T_{ij} \geq 0, \sum_i \alpha_i = 1, \forall i: \sum_j T_{ij} \leq 0. 
\end{align}

\noindent This problem, instead, can be written in terms of the general reparameterization (Eq. \eqref{eq:general-re-parmetrization}) as: 

\begin{equation}
\label{eq:opt-general-f-params-pi}
    \min_{\bm{a}, \bm{\gamma}, \bm{Z}} \quad \sum_{i=1}^{l} w_i \left( 
    f_i\left(\pi_{\alpha}(\bm{a}, \bm{\gamma}, \bm{Z}), \; \pi_{T}(\bm{a}, \bm{\gamma}, \bm{Z})\right) - g_i 
    \right)^2,
\end{equation}
\noindent with the only constraints being that all $\bm{\gamma}$ elements must be non-zero. There are many examples of differentiable functions $f_i$ that describe useful properties that one might want to match when fitting a PH distribution. The moment matching objective (Eq. \eqref{eq:opt-unconstrained-genral}), for instance, is obtained by setting $f_i(\bm{\alpha}, \bm{T}) = i!(-1)^i\bm{\alpha} \bm{T}^{-i} \mathds{1}_n$, with the associated target value $g_i = m_i$ for each of the moments. Other immediately available properties include the PDF and CDF, as well as the hazard rate function at any given point or the Laplace transform function at any  real point. In some contexts it might be useful to mix these types of target properties; for example, one could use a set of functions and targets of mixed type to fit a PH distribution that follows a given shape over some interval, and also approximately conforms to a set of prescribed moments. 

\section{Fitting framework}\label{sec:framework}
%\Hezi{Shouldn't this be a first subsection of (6)?}

So far, we have introduced the key aspect of our approach including the re-parametrizations for the general, Coxian, and Hyper-Erlang families of PH distributions, and the associated moment-fitting optimization problem for each one. In order to apply the method in practice, for a given set of moments, we need to first select one of the re-parametrizations, then select the size $n$ and hyper-parameters of the fitting process (such as step size and convergence criteria for the gradient descent procedure). 

%described in Sections~\ref{sec:general_representation},~\ref{sec:mix_erlang_representation} and~\ref{sec:cox_representation}. Furthermore, the GD optimization process can only be applied for a pre-determined PH size after selecting the re-parameterization. The PH size may affect dramatically the accuracy of our method.  

In this paper, we do not attempt to optimize the hyper-parameter choice of which re-parametrization to use for a given case. Instead, we conduct experiments for all of them. In practice, given a sequence of moments, and without any prior information about the most appropriate structure to use, one could try all three and use the best-obtained result (or the one that gives a favorable trade-off between goodness of fit and number of parameters). 

For a given re-parametrization, we next need to decide the PH size $n$ in the case of the general and Coxian structures, and under the Hyper-Erlang, we require the block sizes $\{d_1, ...,d_k\}$. Finding the right $n$ or the right set $\{d_1, ...,d_k\}$, in order to fit a given sequence of moments, is not a trivial task. Clearly, it is always possible to use a larger than required size (there is an embedding of PH(n) in PH(n+1)), but opting for a large size is done at the cost of larger parameter spaces and computation time. 

In the experiments conducted here we attempted to fit each sequence of moments in several trials with different hyperparameter settings, and extract and report the best fit. This includes the value $n$, which represents the PH size. If we apply our approach to the case of Hyper-Erlang, the overall size is not sufficient, as it is required to specify the different sizes of all blocks, $\{d_1, ...,d_k\}$. 
%We currently do not perform optimization over the hyperparameters $n$ or ${d_1, ...,d_k}$ in the Hyper-Erlang case. Instead, we evaluate the model for a set of predefined values of $n$ and corresponding ${d_1, ...,d_k}$. 
Specifically, we considered $ n \in \{20,50,100\} $. The exact values of $\{d_1, ...,d_k\}$ chosen for each $n$ are given in Appendix~\ref{sec:appB}. We also experimented with selecting $n$ and $\{d_1, ...,d_k\}$ via Bayesian optimization~\cite{frazier2018tutorial}, but this approach did not yield improvements over the straightforward configurations presented here.

The performance of the proposed method is heavily influenced by the initial points selected for $(\bm{a}, \bm{T})$. To increase accuracy, decrease runtime, and reduce the dependence on the initial starting points, we apply the optimization in parallel with multiple starting points and stop when the best one converges or the maximal number of steps is reached. 

Suppose $s$ is the number of starting points that we initially use when executing the GD process. Increasing the value of $s$ stochastically increases the accuracy and the speed of convergence, since some initial points will have a better trajectory of convergence than others. However, this advantage comes at a computational cost that is linear in $s$. Via experimentation, we tuned $s$ and it was set to be 10000 in all the experiments presented in Section~\ref{sec:experiment}. We note that we set $s=10000$ only at the beginning of the GD procedure, and only the best processes continue until the end, with less successful trajectories terminated before the others converge. In Appendix~\ref{sec:appB}, we provide full details on the optimization scheme used, and specifically how many processes are kept alive as a function of the step number. 

% Instead of doing a grid or random search on the space of hyper-parameters, we use Bayesian optimization to find the most appropriate hyper-parameter value: $n$ or the set $\{d_1, ...,d_k\}$ in the case of Hyper-Erlang, that optimizes the weighted moment fitting regression objective. Bayesian optimization is a global optimization technique that uses a probabilistic model to predict the relationship between the input hyper-parameters and the output of an objective function. The algorithm learns as new data is obtained and focuses on selecting parameters in promising areas of the search space. For more information on the Bayesian optimization algorithm, we refer to Frazier \cite{frazier2018tutorial}. 
% \mt{why this optimizer? why not an other, e.g. BIPOP-CMA-ES}

% For each re-parametrization we define a state space for the Bayesian optimization process. For the general and the Coxian PHs, we define it to be $n \in  \{1,..., N\}$, that is the space of possible PH sizes to search over. For the Hyper-Erlang, the state-space is $d_i \in \{1,...,D_{max}\}$ for $i \leq k$,  for some fixed value $D_{max}$ that states the largest block size to consider. For simplicity, we set a constant $k=5$ for all our experiments. In future work, optimizing for $k$ should be considered.  
% In order to control the total size separately from the block sizes, we add another constraint such that $\sum_{i=1}^k d_i \leq N$. 

A step (or epoch) is a single computation of the loss function  (c.f. \eqref{eq:opt-unconstrained-genral}) and the ensuing gradient descent update of the Markovian representation for each of the active copies of the optimization problem. We optimize over $125000$ epochs at most. In order to reduce computation time, we added additional stopping criteria. If the loss function is less than a pre-determined $\epsilon = 10^{-9}$ \footnote{Empirically, this has been shown to produce extremely low errors.}, we stop the process entirely and return the current best PH distribution. Such a value suggests that the optimization procedure solves with a negligible error.

% Once the process ends, we use the $n$ (in the general and Coxian cases), or the set  $\{d_1, ...,d_k\}$ (in the Hyper-Erlang case), that minimizes the moment fitting loss function Eq.~\eqref{eq:opt-unconstrained-genral}. We allow the Bayesian optimization process to conduct 15 searches in each simulation. This additional parameter can be used to trade off accuracy and run time. We also added stopping criteria for Bayesian optimization. If the loss function is less than a pre-determined $\delta = 1e-{7}$, we stop the process and don't continue to the next iteration. Such an event can reduce the runtime significantly. 

% \begin{remark}
%    The performance of our method is influenced by several factors not directly addressed in this paper. For instance, the initial starting point of the GD optimizer plays a significant role. In our approach, the starting point is selected arbitrarily, and we explore only a single starting point for a given $n$ (or $\{d_1, ...,d_k\}$ in the case of the Hyper-Erlang distribution). Further, the value of $k$ is constant. However, our primary objective is to demonstrate that the proposed method is capable of accurately fitting moments while maintaining practical runtimes. We leave these potential improvements to future work.  
% \end{remark}

\section{Experiments}
\label{sec:experiment}

The method presented in this paper relies on non-convex optimization, hence convergence to the global minimum is not guaranteed. In this section we evaluate the method empirically. In order to properly evaluate any moment fitting method, it is necessary to test it against a wide range of moment targets, reflecting different types of distributions. To this end, we propose a diverse data set to assess how closely the moments of the PHs obtained by the method match the target moments. In Section~\ref{sec:testset}, we provide full details of the test set used for this purpose, highlighting its diversity. Section~\ref{sec:peva} presents a performance evaluation aiming to measure how effectively the method fits the moments in the test set. 

\subsection{Test set}\label{sec:testset}

This section provides the details of the generation process for the sequences of moment with which we test our method. For this purpose, we sample PH distribution from three different PH structures, as follows: 

% \mt{why do we restrict ourself for moments generated from PH?}

\subsubsection{Sampling procedure}\label{sec:sampling}

We consider three sampling structures for a given PH size $n$. We first sample the size of a PH uniformly from $[1, 200]$ and than sample the parameters of the distribution according to the following cases. 

\begin{itemize}
    \item \textbf{General}: We sample the $\bm{\alpha}$, $\bm{\gamma}$, $\bm{Z}$ parameters of the re-parametrization (Eq. \eqref{eq:general-re-parmetrization}) and map the result to the Markovian PH representation via the mapping $\pi$ (Eq. \eqref{eq:parametrization-map}). We generated $\bm{\alpha}$ to be sampled uniformly in the interior of the $n$-dimensional simplex. The values of $\bm{\gamma}$ are sampled uniformly from the interval $[0.1, 10]$. The values of $Z$ are sampled uniformly from the interval $[0, 1]$, and to broaden the range of moments obtained by this procedure, we then set some of the non-diagonal elements of $\bm{T}$ to zero. The number of elements set to zero is sampled uniformly between $0$ and $n^2 - n$  with their locations chosen uniformly. A Markovian PH with parameters $\bm{\alpha}, \bm{T}$ is then obtained via the mapping $\pi$. 
    % \mt{i was wondering how often do you obtain PH with complex eigenvalue with this procedure.}
    \item \textbf{Coxian}: Here we sample a Coxian distribution complying with the structure described in Section~\ref{sec:cox_representation}, where $p_i$, for $i \leq n$  is sampled uniformly from 0 to 1, and $\lambda_i$ is sampled uniformly from the interval of 0.1 to 10, for every $i\leq n$. 
    \item \textbf{ Hyper-Erlang}: Here, we sample a Hyper-Erlang distribution complying with the structure described in Section~\ref{sec:mix_erlang_representation}. We sample $\omega_i$ and $\lambda_i$ uniformly from the intervals $(0,1)$ and $(0.1,10)$ respectively, for $i \leq k$, where $k$, the number of blocks, is sampled from the interval $[1, \frac{n}{2}]$, which allows room for multiple blocks while maintaining them with decent sizes. The values $d_1,\ldots,d_k$, are chosen arbitrary, such that $d_i \geq 1, \forall i \leq k $. This is equivalent to choosing $k-1$ among  $n - 1$ units. 
\end{itemize}

 We note that both Coxian and Hyper-Erlang are known to be a large subsets of PH family and hence are widely used for moment matching~\cite{10.1007/978-3-642-39408-9_17, OSOGAMI2006524, doi:10.1287/opre.30.1.125, Altiok01061985}.
% \mt{Coxian and Hyper-Erlang are known for having real eigenvalues. might or might not good for fitting the moments of PH with complex eigenvalues.}.
We emphasize that in the test set generation, the Coxian and Hyper-Erlang structures are used not because they involve a small number of parameters, but because they encompass a broad subset of distributions. For completeness, we also include samples from the general structure, that covers, in principle, all the space of PH distributions. Without loss of generality, we scale each PH distribution so that its first moment is exactly 1 (this is just a selection of convenient units). From each PH structure, we generated 500 examples.

\subsubsection{Testset coverage}\label{subsec:testset}

Using the data set generated from the sampling framework described, we present the range of each order of the first 20 moments in our test set in Table~\ref{tab:mom_range} together with the $25^{th}$ and $75^{th}$ percentiles. In Figures~\ref{fig:scv_skewness} and~\ref{fig:scv_kurtosis}, we illustrate various combinations of Squared Coefficient of Variance (SCV) values alongside the corresponding skewness and kurtosis values, respectively. These figures highlight the joint diversity of the first four moments, given that the first moment is always set to 1. From Figures~\ref{fig:scv_skewness} and~\ref{fig:scv_kurtosis}, it is evident that each sampling structure contributes to the diversity of the test set's moment space.
From the subset relations 
\[ \text{Hyper-Erlang PH($n$)} \subset \text{Coxian PH($n$)} \subset \text{General ($n$)} \]
one would expect that the General PH($n$) samples cover the whole valid ranges in Figures~\ref{fig:scv_skewness} and~\ref{fig:scv_kurtosis}, but the way we sample the General PH($n$) distributions has an effect on the moments. As a result the samples from the three subsets (General, Coxian, Hyper-Erlang) seem to be dominant in different regions in Figure~\ref{fig:scv_kurtosis}.
% \mt{these are $m_2$, $m_3$ and $m_4$ relations. the only information i can read from them is that the ``general'' ph is rather restricted. may be we need to increase the number of zero elements in the ``general'' ph. }

%Please add the following packages if necessary:
%\usepackage{booktabs, multirow} % for borders and merged ranges
%\usepackage{soul}% for underlines
%\usepackage{xcolor,colortbl} % for cell colors
%\usepackage{changepage,threeparttable} % for wide tables
%If the table is too wide, replace \begin{table}[!htp]...\end{table} with
%\begin{adjustwidth}{-2.5 cm}{-2.5 cm}\centering\begin{threeparttable}[!htb]...\end{threeparttable}\end{adjustwidth}
\begin{table}[!htp]\centering
\caption{Statistic properties of test set moments}\label{tab:mom_range}
\scriptsize
\begin{tabular}{rrrrr||rrrrrr}\toprule
Moment &min &max &25\% &75\% &Moment &min &max &25\% &75\% \\
\hline
1 &1 &1 &1 &1 &11 &1.31037 &7.84e+48 &2.29e+8 &6.37e+15 \\
2 &1.005 &384.63 &1.84 &5.78 &12 &1.382441 &1.16e+56 &3.15e+9 &7.38e+17 \\
3 &1.015 &355172.7 &6 &72.37 &13 &1.465387 &1.85e+63 &4.83e+10 &1.08e+20 \\
4 &1.03 &4.81e+8 &26.59 &1570.974 &14 &1.560637 &3.20e+70 &8.82e+11 &1.84e+22 \\
5 &1.051 &1.81e+12 &149.76 &51060.71 &15 &1.669882 &5.91e+77 &1.55e+13 &3.85e+24 \\
6 &1.077 &1.02e+17 &1111.79 &2262321 &16 &1.795123 &1.16e+85 &3.13e+14 &7.50e+26 \\
7 &1.109 &6.21e+22 &9616.98 &1.22e+8 &17 &1.938733 &2.44e+92 &5.79e+15 &1.64e+29 \\
8 &1.148 &4.33e+28 &95385.08 &8.21e+9 &18 &2.103525 &5.41e+99 &1.23e+17 &3.83e+31 \\
9 &1.194 &4.70e+34 &1065126 &6.43e+11 &19 &2.292842 &1.27e+107 &2.75e+18 &1.03e+34 \\
10 &1.248 &5.78e+41 &15617661 &5.73e+13 &20 &2.510662 &3.12e+114 &5.12e+19 &2.77e+36 \\
\bottomrule
\end{tabular}
\end{table}

% \begin{table}[!htp]\centering
% \caption{Moments range}\label{tab:mom_range}
% \scriptsize
% \begin{tabular}{lrr|rrrr}\toprule
% moment &min &max &moment &min &max \\
% 1 &1 &1 &11 &1.31037 &7.84e+48 \\
% 2 &1.005 &384.6259826 &12 &1.382441 &1.16e+56 \\
% 3 &1.01505 &355172.7323 &13 &1.465387 &1.85e+63 \\
% 4 &1.030276 &480923923.6 &14 &1.560637 &3.20e+70 \\
% 5 &1.050881 &1.81e+12 &15 &1.669882 &5.91e+77 \\
% 6 &1.077153 &1.02e+17 &16 &1.795123 &1.16e+85 \\
% 7 &1.109468 &6.21e+22 &17 &1.938733 &2.44e+92 \\
% 8 &1.148299 &4.33e+28 &18 &2.103525 &5.41e+99 \\
% 9 &1.194231 &4.70e+34 &19 &2.292842 &1.27e+107 \\
% 10 &1.247972 &5.78e+41 &20 &2.510662 &3.12e+114 \\
% \bottomrule
% \end{tabular}

% \end{table}

\begin{figure}[ht]
    \centering
    \begin{subfigure}[b]{0.48\textwidth}
        \centering
        \includegraphics[width=\textwidth]{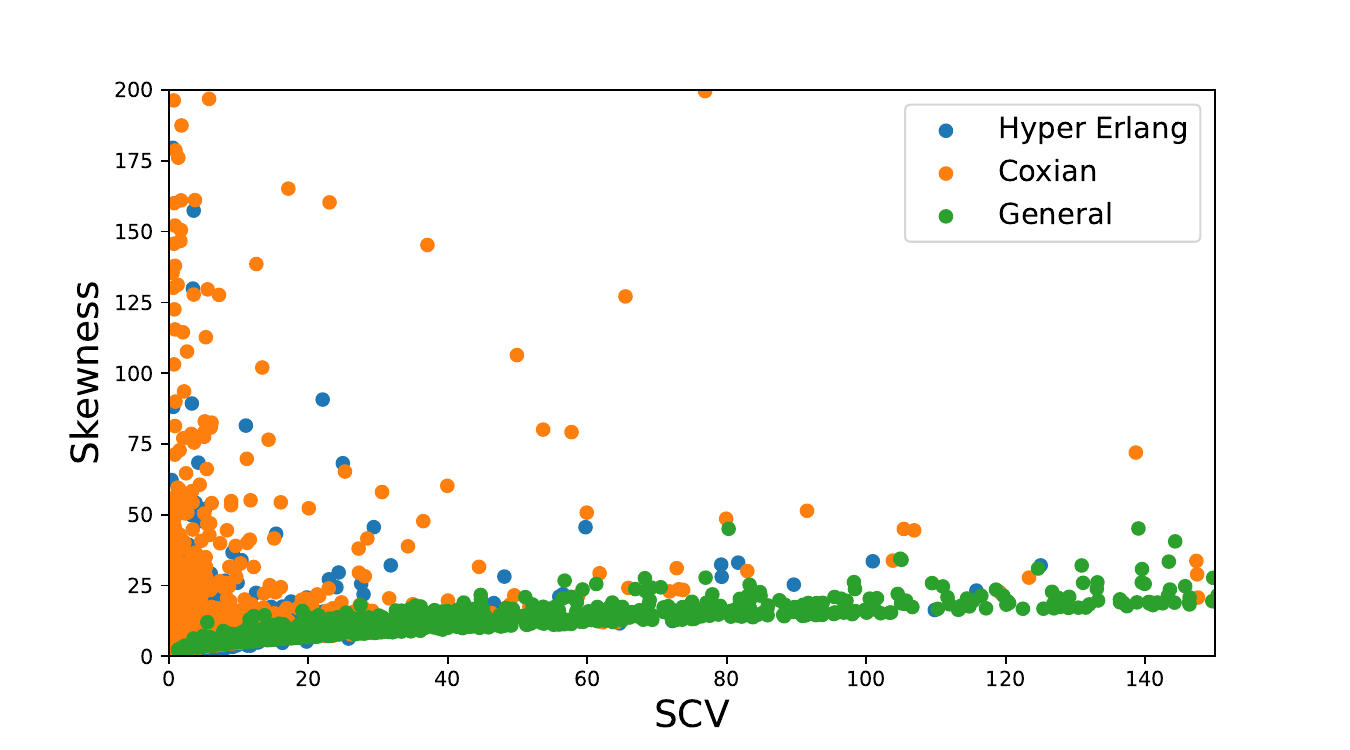}
        \caption{SCV-Skewness combination by different sampling techniques. }
        \label{fig:scv_skewness}
    \end{subfigure}
    \hfill
    \begin{subfigure}[b]{0.48\textwidth}
        \centering
        \includegraphics[width=\textwidth]{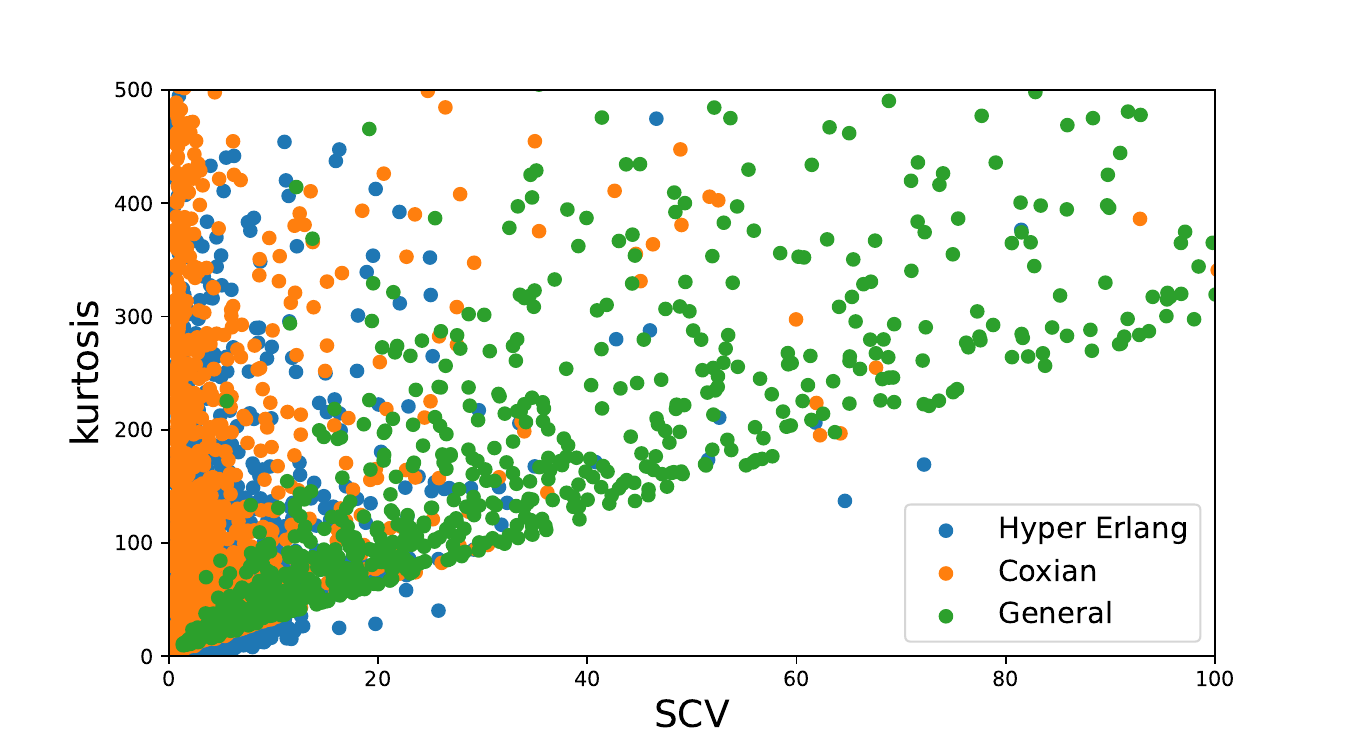}
        \caption{SCV-Kurtosios combination by different sampling techniques}
        \label{fig:scv_kurtosis}
    \end{subfigure}
    \caption{SCV, Skewness, and Kurtosis scatter.}
    \label{fig:scv_analysis}
\end{figure}

\subsection{Performance Evaluation}\label{sec:peva}

The main analysis we present below aims to determine how the accuracy and runtime of the proposed method are affected by two parameters. The first is the number of fitted moments. Typically, the more moments are fitted, the more challenging the task. The second parameter is the size $n$ of the PH used. Naturally, a larger $n$ will be more likely to match moments accurately. At least, for a given and achievable set of moments there is a minimal (yet unknown) size required. However, increasing the size will incur longer runtimes. In our experiments, we examine these questions by varying the following parameters: 

\begin{itemize}
\item $l$ - number of fitted moments. We examine accuracy and runtime for the first 5, 10, and 20 moments, i.e., $[1,\ldots,5], [1,\ldots,10], [1,\ldots,20]$ moments.
\item $n$ - the fitted PH size. We examine accuracy and runtime for sizes 20, 50, and 100. 
\end{itemize}

For a specific instance to be considered accurate, we suggest the following metric. We first compute the Mean Absolute Percentage Error (MAPE) for each individual moment. Formally,

\[
MAPE_i(j) =\left| \frac{m_i(j) - \hat{m}_i(j)}{m_i(j)} \right| \times 100,
%, \forall i \leq M
\]

\noindent where $m_i(j)$ and $\hat{m}_i(j)$  are the target and obtained value of the $i^{th}$ moment for the $j^{th}$ instance. Instance $j$ is then considered accurate  if :

$$\max_{i\leq M} MAPE_i(j) \leq \eta,$$ 

\noindent for some pre-determined threshold $\eta$. Finally, the reported accuracy of the method in each setting is measured by the \textit{success rate}, which is defined as: 

\begin{align*}
    \text{success rate}[\%] = 100\times\frac{\text{Number of accurate instances}}{\text{Total number of instances}}.
\end{align*}

In the next section, we present runtime and the \textit{success rate} for each part of the test set described in Section~\ref{sec:sampling}, and for each considered combination of number of moments $M$ and the PH size $n$, and for the three re-parameterization (i.e., General, Coxian, and Hyper-Erlang). Success rate and runtime results are presented separately for each value of $\eta \in \{0.2\%,0.5\%,1\%\}$. In all moment fitting experiments we use for the weights $w_i = m_i^{-2}$ in order to make the optimization process scale agnostic and treat all moments equally. All experiments reported in this paper were preformed on an Intel Xeon Gold 5115 Tray Processor with 128 GB RAM.

\section{Results}\label{sec:results}

%\mt{the applied $w_i$ weights has to be defined in this section.} \Hezi{added in the prev paragraph}

In this section, we present the results of the numerical experiments. The main accuracy results are presented in nine figures devoted to the combinations of test data set (the three sampling procedures discussed in Section \ref{sec:sampling}) and threshold values $\eta \in \{0.2\%, 0.5\%, 1\%\}$ defining success rates. Within each figure, we present the success rate of the moment fitting method for each combination of $l$ (i.e., the number of fitted moments), $n$, the PH size used for fitting, and the re-parametrization used. The values of $l$ and $n$ are marked in the x-axis ticks, and the re-parametrization is given by the color of the bar chart, as described in the legend. 

In Figures~\ref{fig:cox1},~\ref{fig:gen1} and~\ref{fig:hyp1}, we present the results for the threshold of $\eta = 1\%$, for the General, Coxian, and Hyper-Erlang data sets, respectively. In Figures~\ref{fig:cox05},~\ref{fig:gen05} and~\ref{fig:hyp05}, we present the results for $\eta =0.5\%$, for the General, Coxian, and Hyper-Erlang data sets, respectively. Finally, in Figures~\ref{fig:cox02},~\ref{fig:gen02} and~\ref{fig:hyp02}, we present the $\eta = 0.2\%$ results for the General, Coxian, and Hyper-Erlang data sets, respectively.

Generally speaking, these results indicate the model achieves an excellent success rate in fitting moments. It is near-perfect in many tested conditions, especially for the general and Coxian datasets and when fitting up to $10$ moments. The model performance is the worst when trying to fit 20 moments under the Hyper-Erlang data set, as is demonstrated in Figure~\ref{fig:hyp02}.

We observe two intuitive trends. First, the success rate decreases as the number of moments fitted increases. Second, allowing a larger PH size leads to higher accuracy and an improved success rate. We also observe that the model performs best under the general data set, for all values of $\eta$. We hypothesize that this dataset, unlike the other two, is more suitable for the method proposed in this paper because a greater variety of parameter combinations can represent these distributions. This is due to their origin from the general PH structure, which offers more degrees of freedom. Consequently, this may lead to a larger number of local minimum points in the parameter space being optimized, providing more convergence possibilities for the process. 

In most cases, the Hyper-Erlang re-parametrization performs better than the Coxian re-parametrization, which in turn performs better than the general one, in terms of accuracy. Surprisingly, even the instances that were originated from the Coxian dataset were fitted more accurately by the Hyper-Erlang re-parametrization. 

We suspect, that the larger number of parameters in the general case, results in more local minima in the goal functions, out of which many results in low accuracy. As a result, the GD optimization gets captured by an inaccurate local minimum with high probability, and, in spite of the large number of initial points, low accuracy is obtained. In contrast, the lower number of parameters of the Coxian and the Hyper-Erlang
cases result in goal functions with a lower number of local minima, and this way, the large number of initial points allows for exploring accurate minima with higher probability.

\begin{figure}[ht!]
    \centering

    % First image
    \begin{subfigure}{0.3\textwidth}
        \centering
        \includegraphics[width=\textwidth]{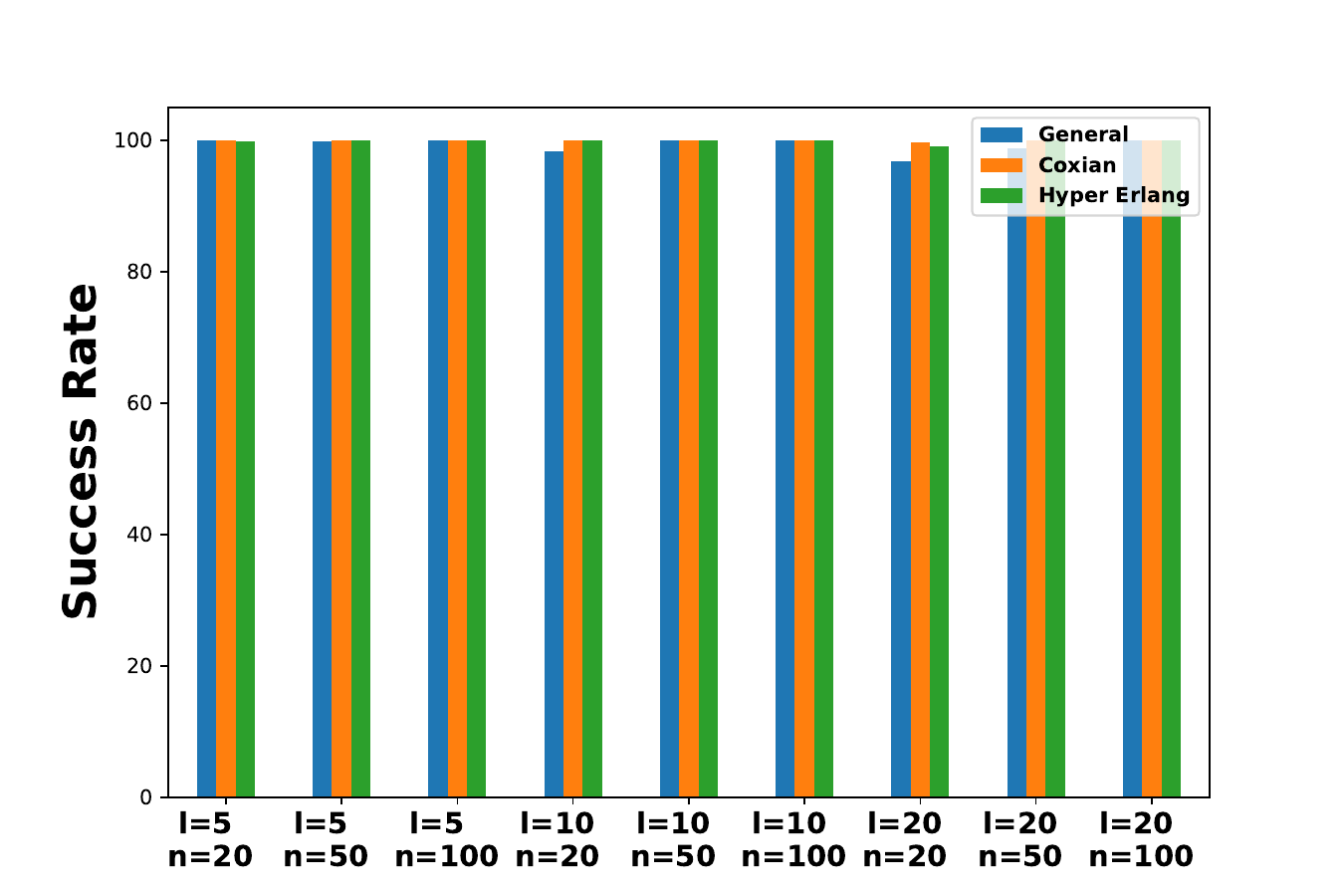}
        \caption{General data set}
        \label{fig:gen1}
    \end{subfigure}
    \hfill
    % Second image
    \begin{subfigure}{0.3\textwidth}
        \centering
        \includegraphics[width=\textwidth]{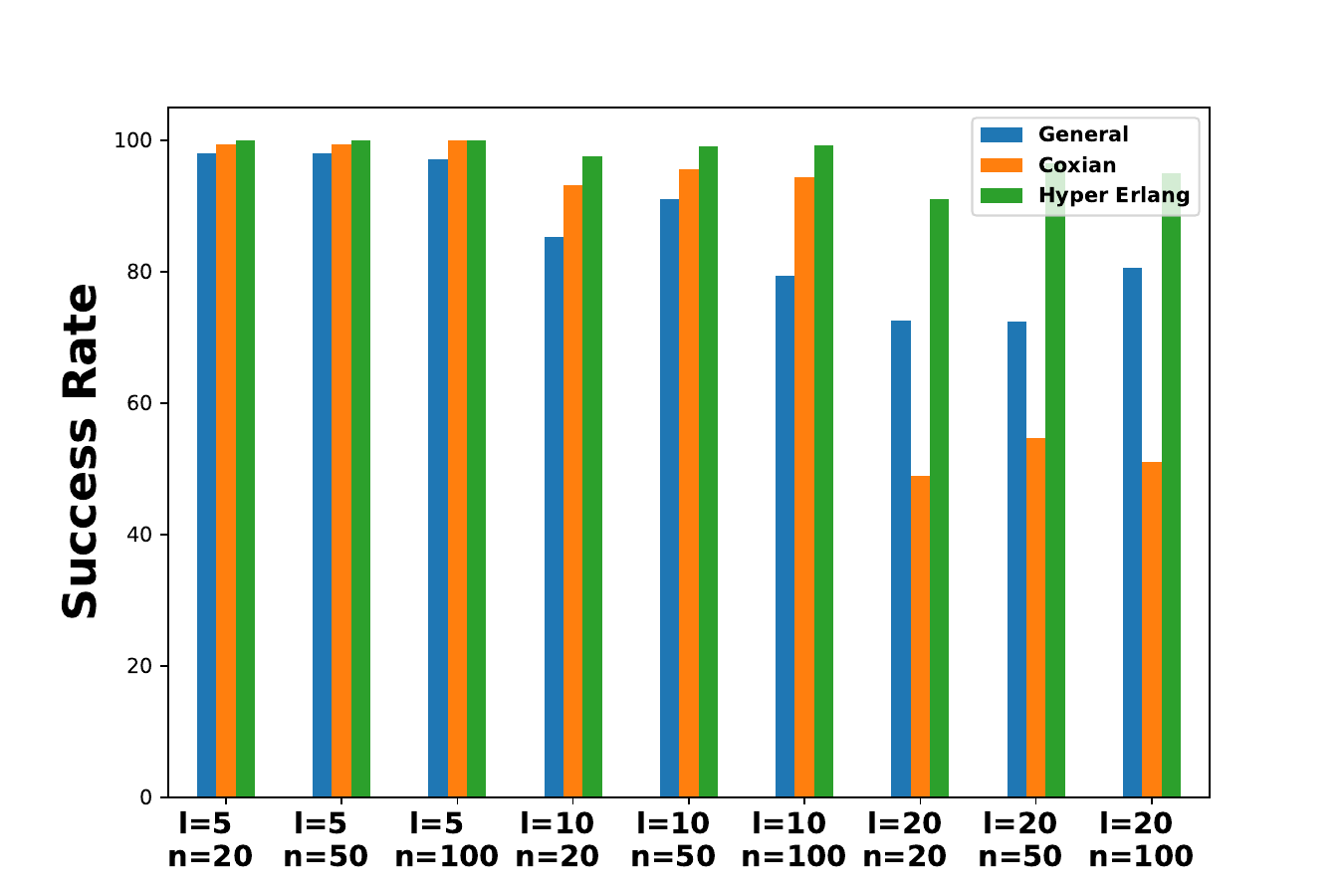}
        \caption{Coxian data set}
        \label{fig:cox1}
    \end{subfigure}
    \hfill
    % Third image
    \begin{subfigure}{0.3\textwidth}
        \centering
    \includegraphics[width=\textwidth]{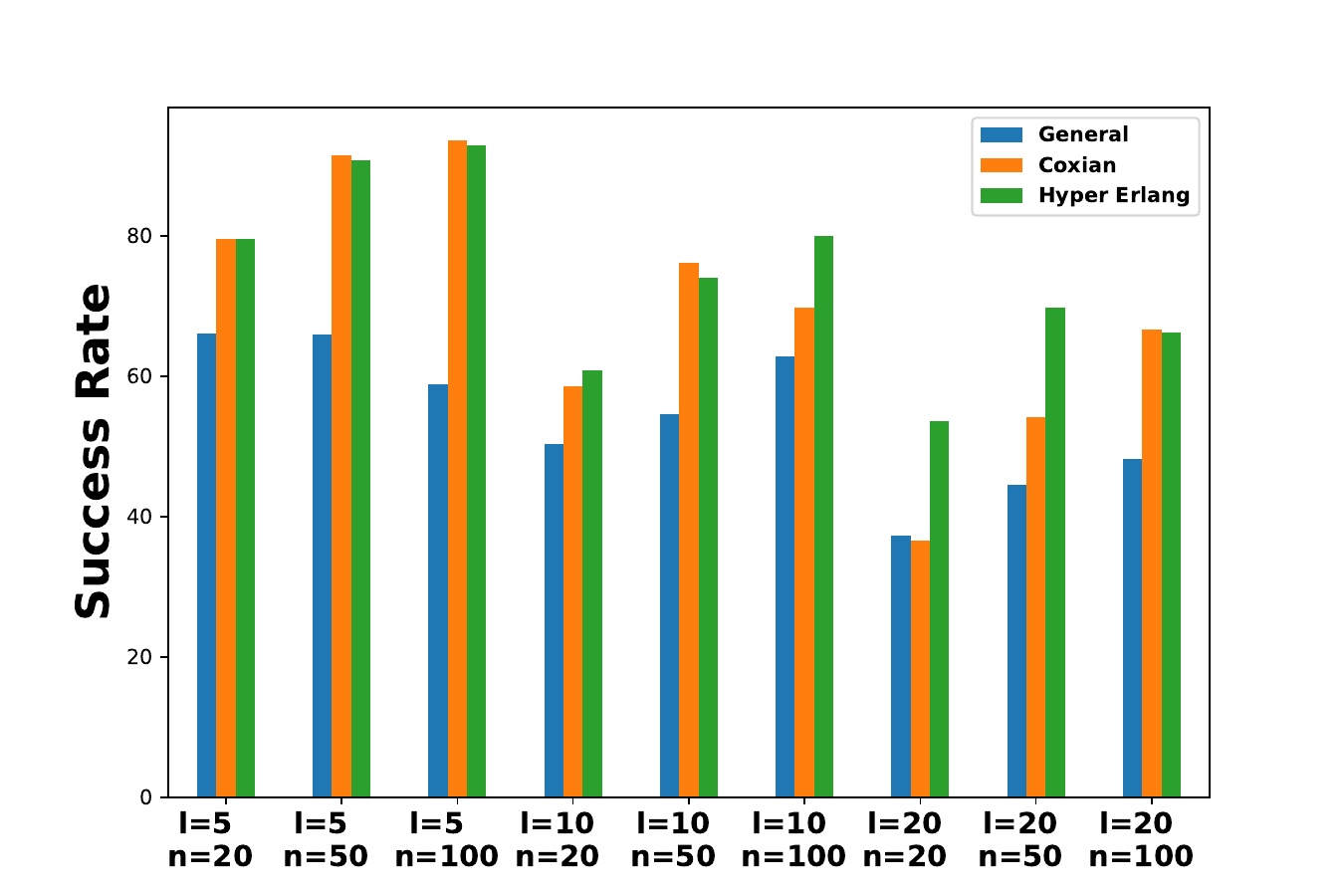}
        \caption{Hyper-Erlang data set}
        \label{fig:hyp1}
    \end{subfigure}

    % Overall caption
    \caption{Success rate $[\%]$ with fitting threshold $\eta=1\%$ (a fit attempt is deemed a success if all moments are within $1\%$ of their respective targets). X-axis labels describe the number of fitted moments $l$ for each bar and the maximum allowed PH size $n$ (short for $n$).}
    \label{fig:three_images_again}
\end{figure}

\begin{figure}[ht!]
    \centering

    % First image
    \begin{subfigure}{0.33\textwidth}
        \centering
        \includegraphics[width=\textwidth]{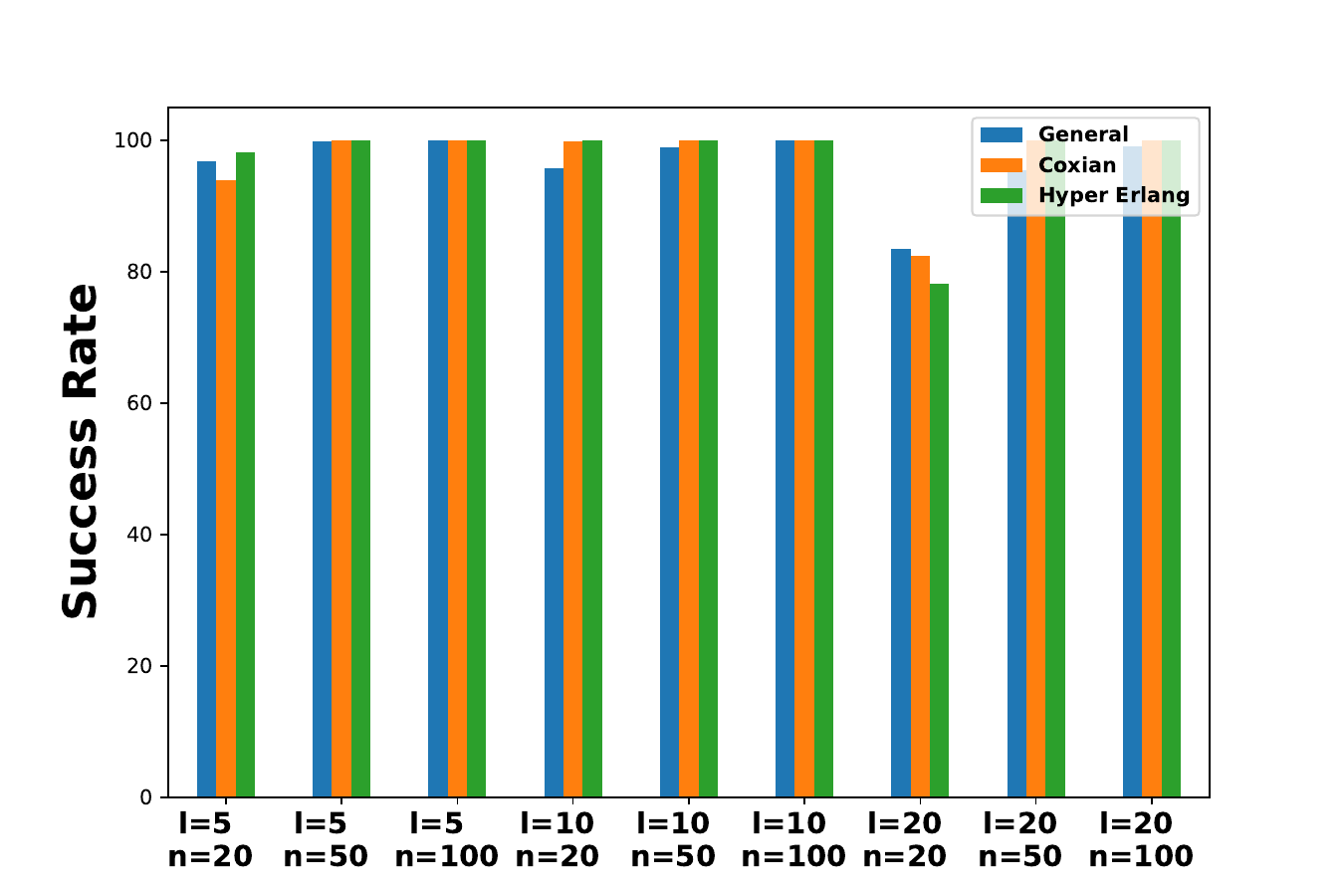}
        \caption{General Data set}
        \label{fig:gen05}
    \end{subfigure}
    \hfill
    % Second image
    \begin{subfigure}{0.3\textwidth}
        \centering
        \includegraphics[width=\textwidth]{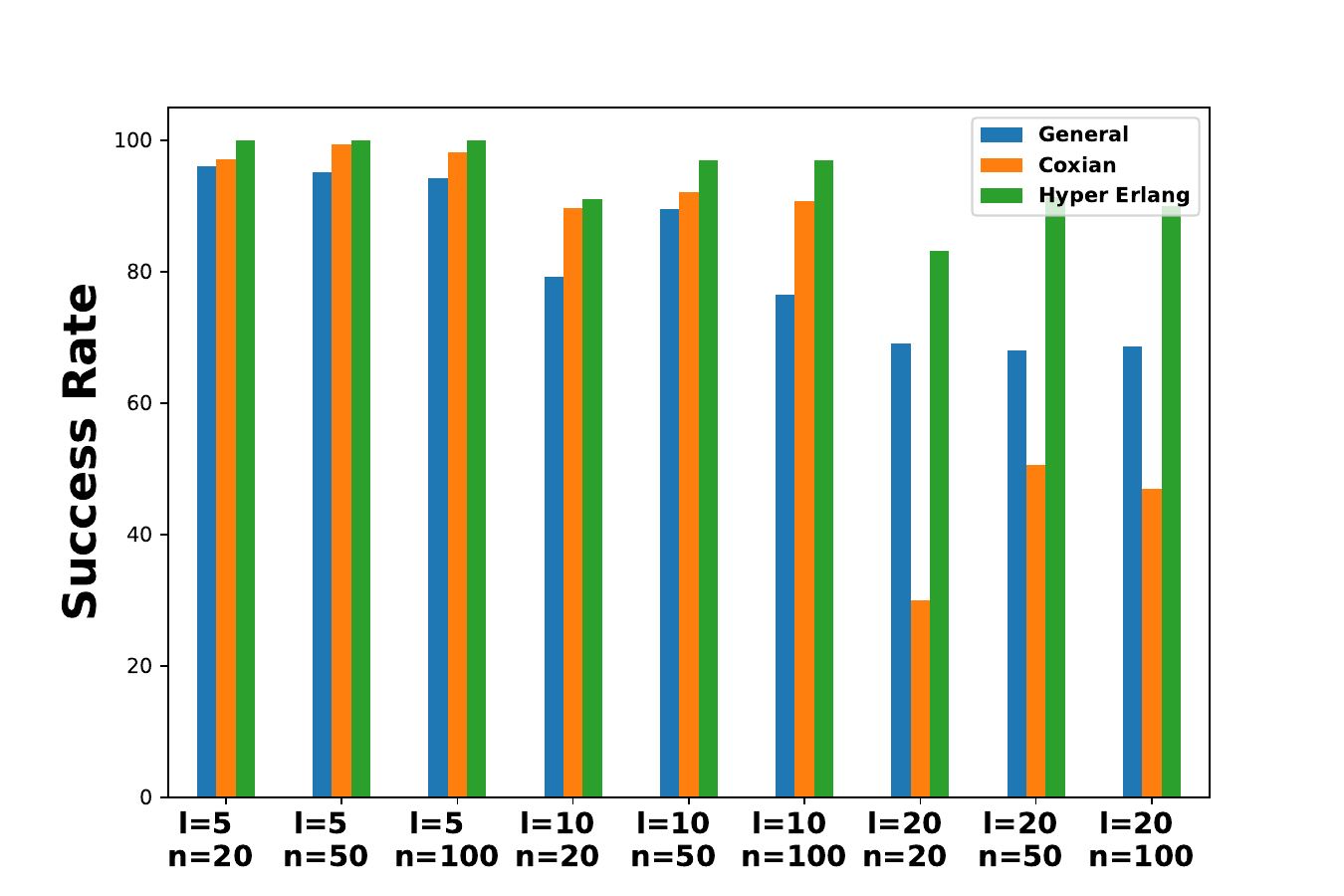}
        \caption{Coxian Data set}
        \label{fig:cox05}
    \end{subfigure}
    \hfill
    % Third image
    \begin{subfigure}{0.3\textwidth}
        \centering  \includegraphics[width=\textwidth]{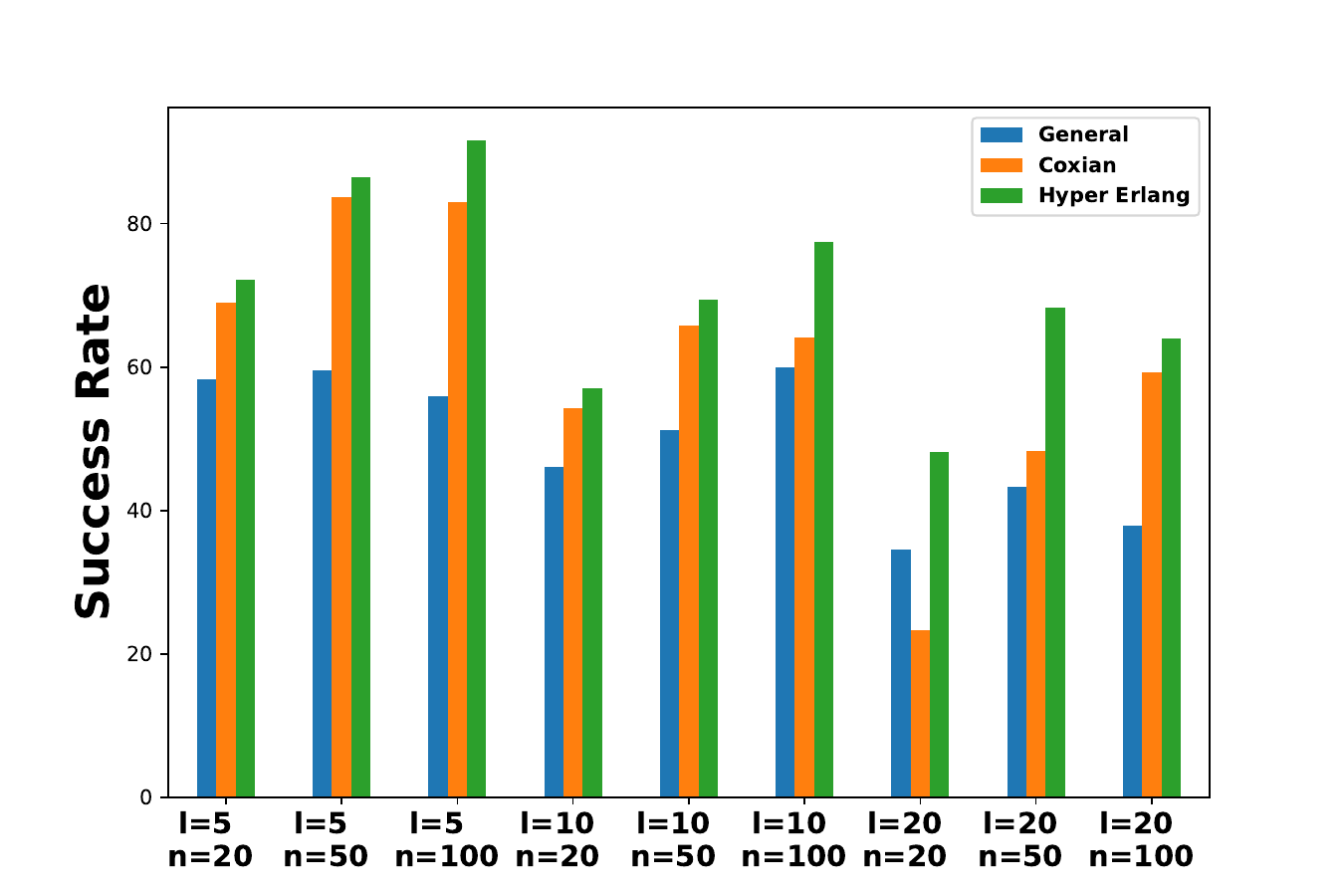}
        \caption{Hyper-Erlang data set}
        \label{fig:hyp05}
    \end{subfigure}

    % Overall caption
    \caption{Success rate $[\%]$ with fitting threshold $\eta=0.5\%$. See caption of Figure \ref{fig:three_images_again} for details.}
    \label{fig:three_images}
\end{figure}

\begin{figure}[ht!]
    \centering

    % First image
    \begin{subfigure}{0.3\textwidth}
        \centering
        \includegraphics[width=\textwidth]{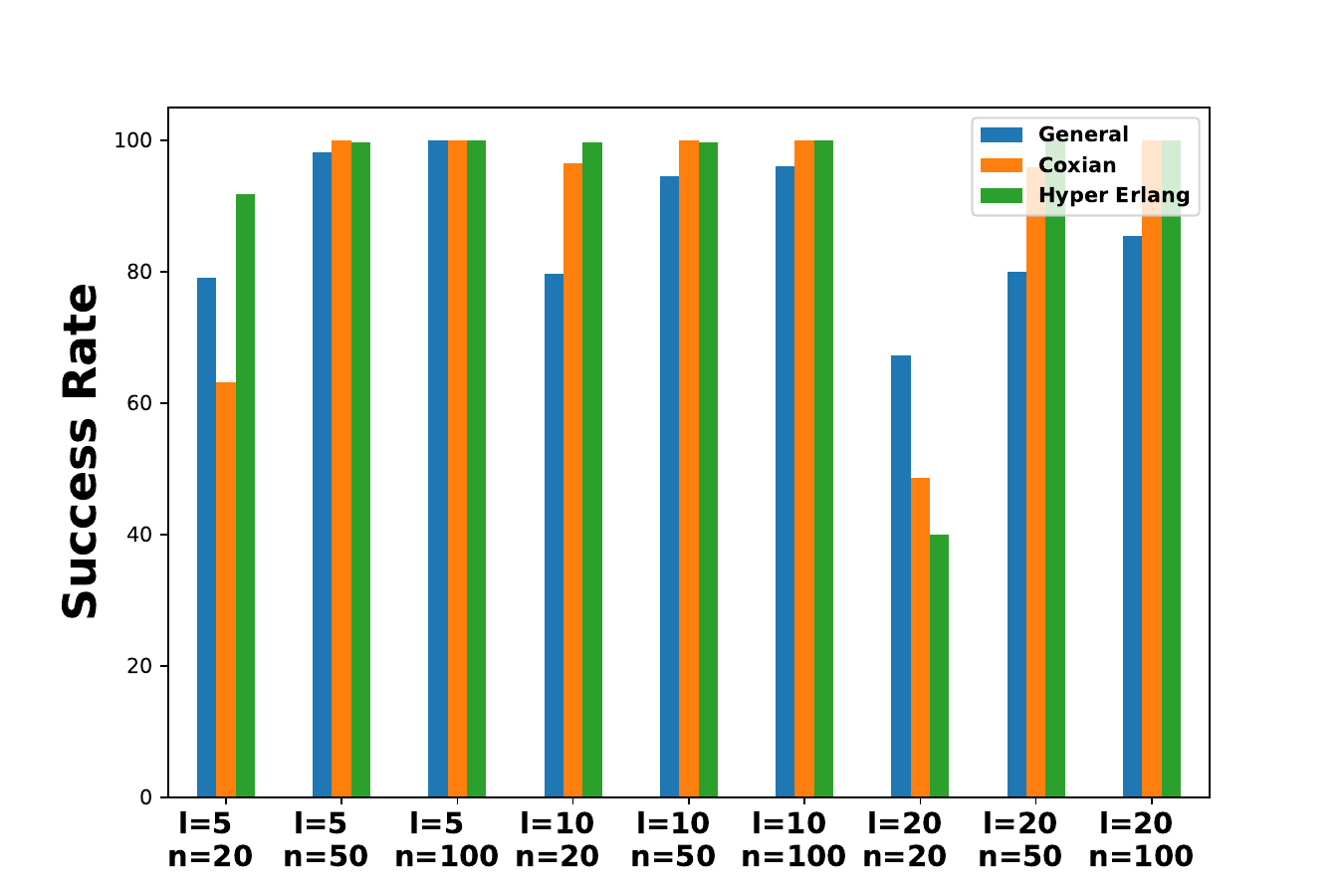}
        \caption{General data set}
        \label{fig:gen02}
    \end{subfigure}
    \hfill
    % Second image
    \begin{subfigure}{0.33\textwidth}
        \centering
        \includegraphics[width=\textwidth]{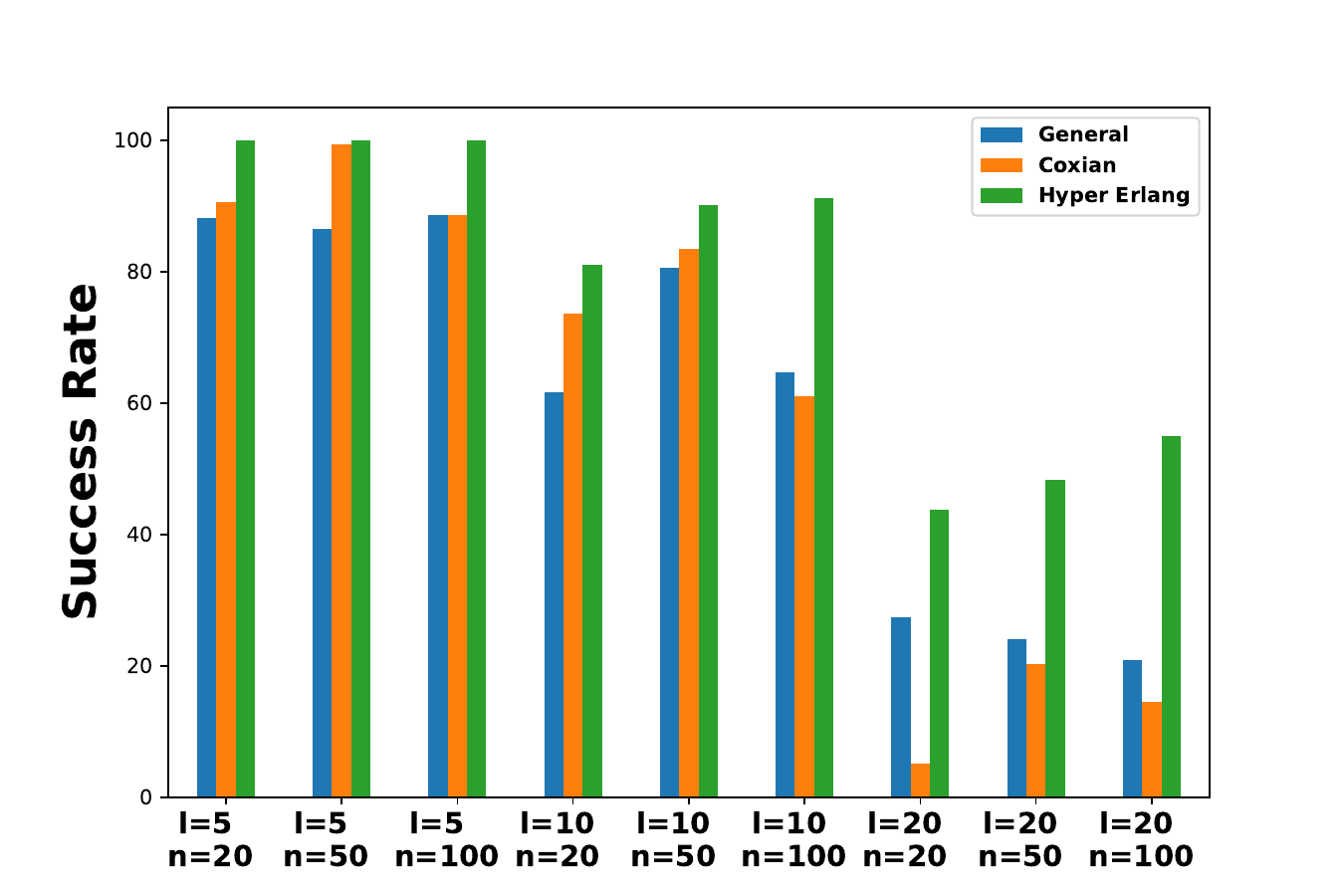}
        \caption{Coxian Data set}
        \label{fig:cox02}
    \end{subfigure}
    \hfill
    % Third image
    \begin{subfigure}{0.33\textwidth}
        \centering
        \includegraphics[width=\textwidth]{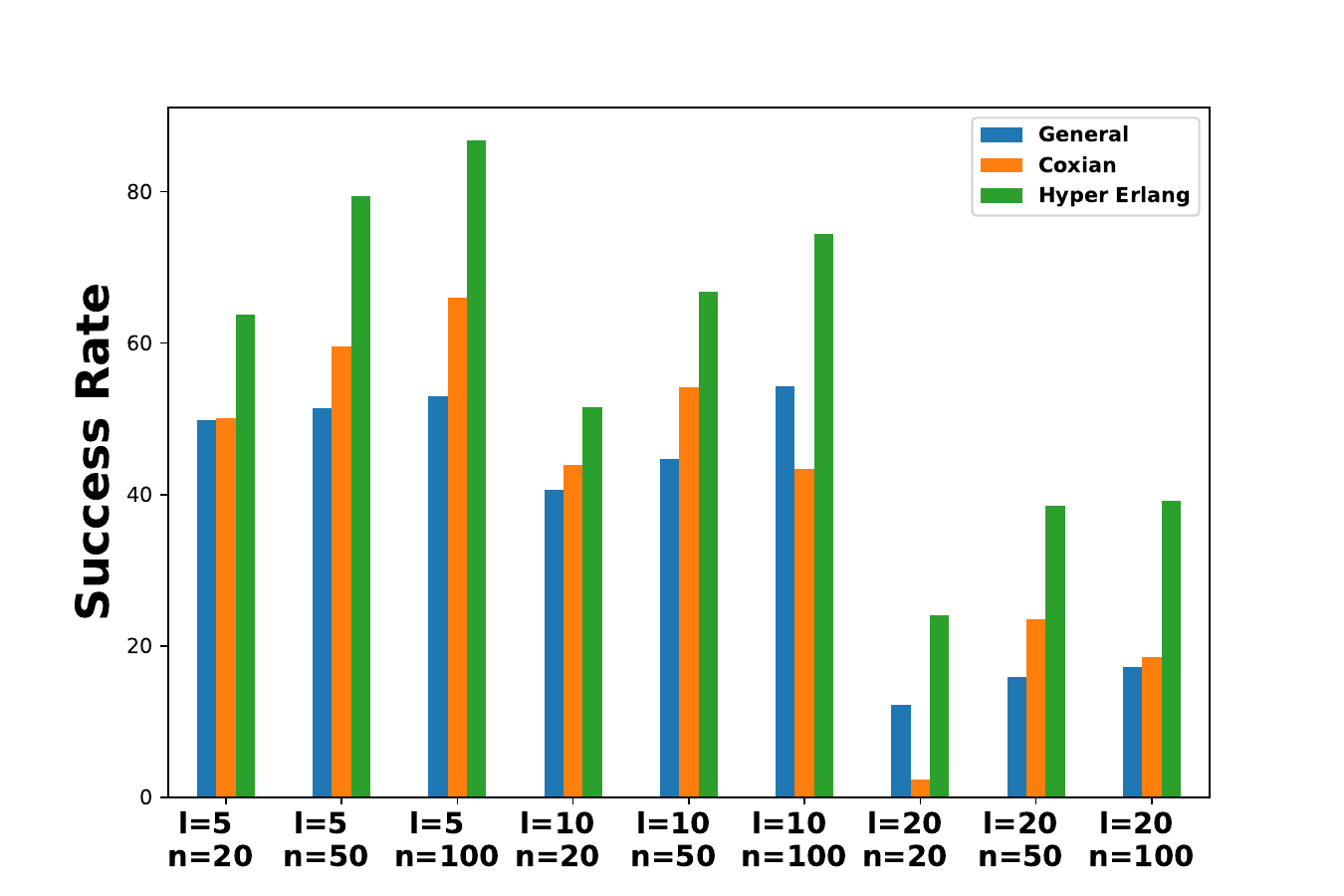}
        \caption{Hyper-Erlang data set}
        \label{fig:hyp02}
    \end{subfigure}

    % Overall caption
    \caption{Success rate $[\%]$ with fitting threshold $\eta=0.2\%$. See caption of Figure \ref{fig:three_images_again} for details.}
    \label{fig:02}
\end{figure}

In Figures~\ref{fig:runtimes_cox},~\ref{fig:runtimes_gen} and~\ref{fig:runtimes_hyp}, we present the runtimes in hours of the fitting methods on the General, Coxian,  and Hyper-Erlang data sets, respectively. The average runtimes range from minutes to three hours. The longer runtimes occur  under the Hyper-Erlang test data set.  

Typically, the more moments we fit, and the larger the PH we allow, the longer it takes. The results demonstrate a natural trade-off between runtimes and accuracy as we increase the PH size $n$. Some cases exhibit a counter-intuitive phenomenon where increasing $n$ comes with a runtime decrease, as shown in all figures. We hypothesize that this happens when we reach the stopping criterion (i.e,  the loss function is lower than $10^{-9}$ ). This might explain non-monotone behavior as well. Where fitting with $n=50$ takes longer than $n=20$ because the computations are heavier, but $n=100$ takes less time, since the PH is large enough such that it converges quickly to the solution and the procedure stops after fewer steps. This can be due to many reasons, such as a more frequent early stopping and better convergence for larger $n$. 

\begin{figure}[ht!]
    \centering
    % First image
    \begin{subfigure}{0.3\textwidth}
        \centering
        \includegraphics[width=\textwidth]{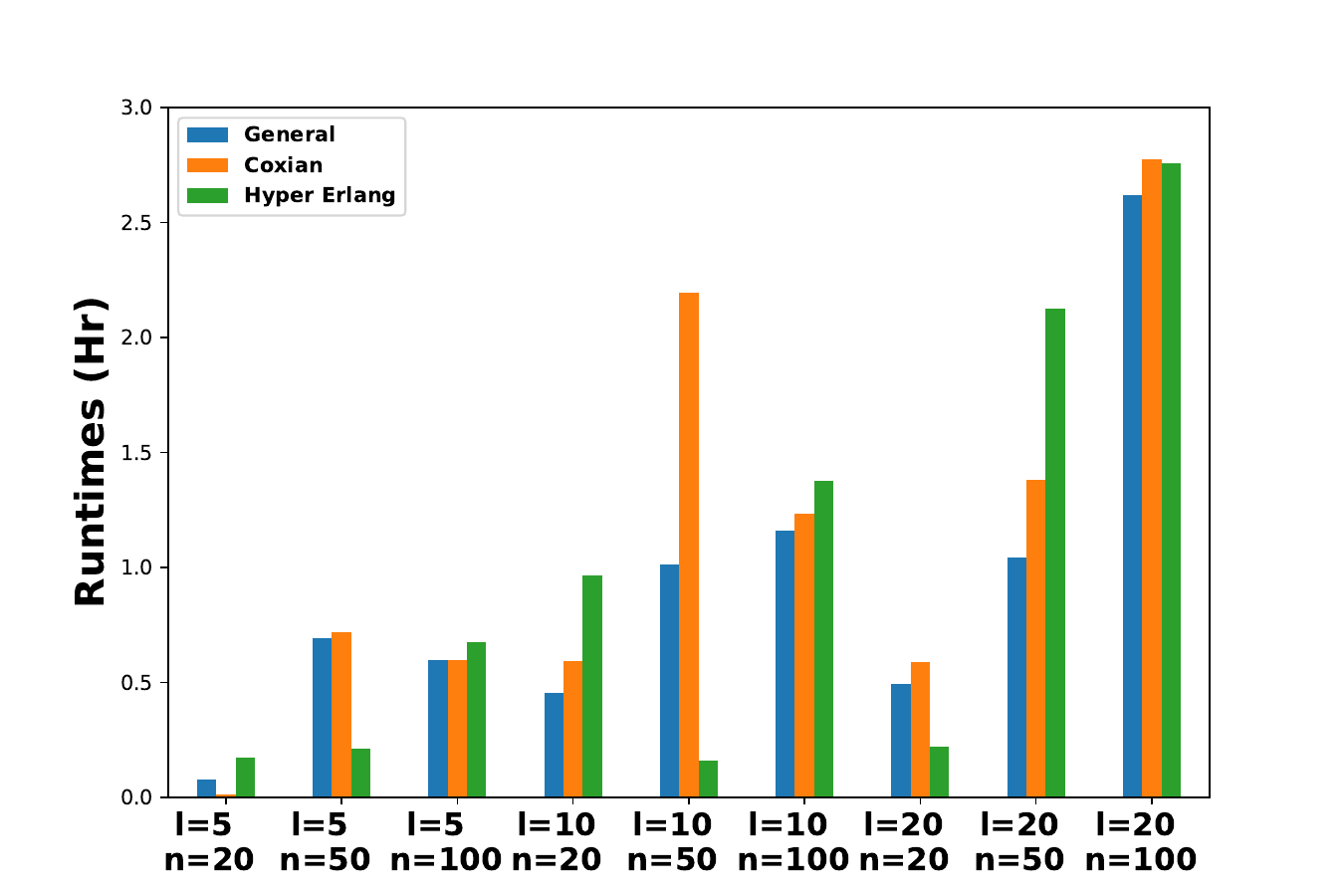}
        \caption{General data set}
        \label{fig:runtimes_gen}
    \end{subfigure}
    \hfill
    % Second image
    \begin{subfigure}{0.3\textwidth}
        \centering
\includegraphics[width=\textwidth]{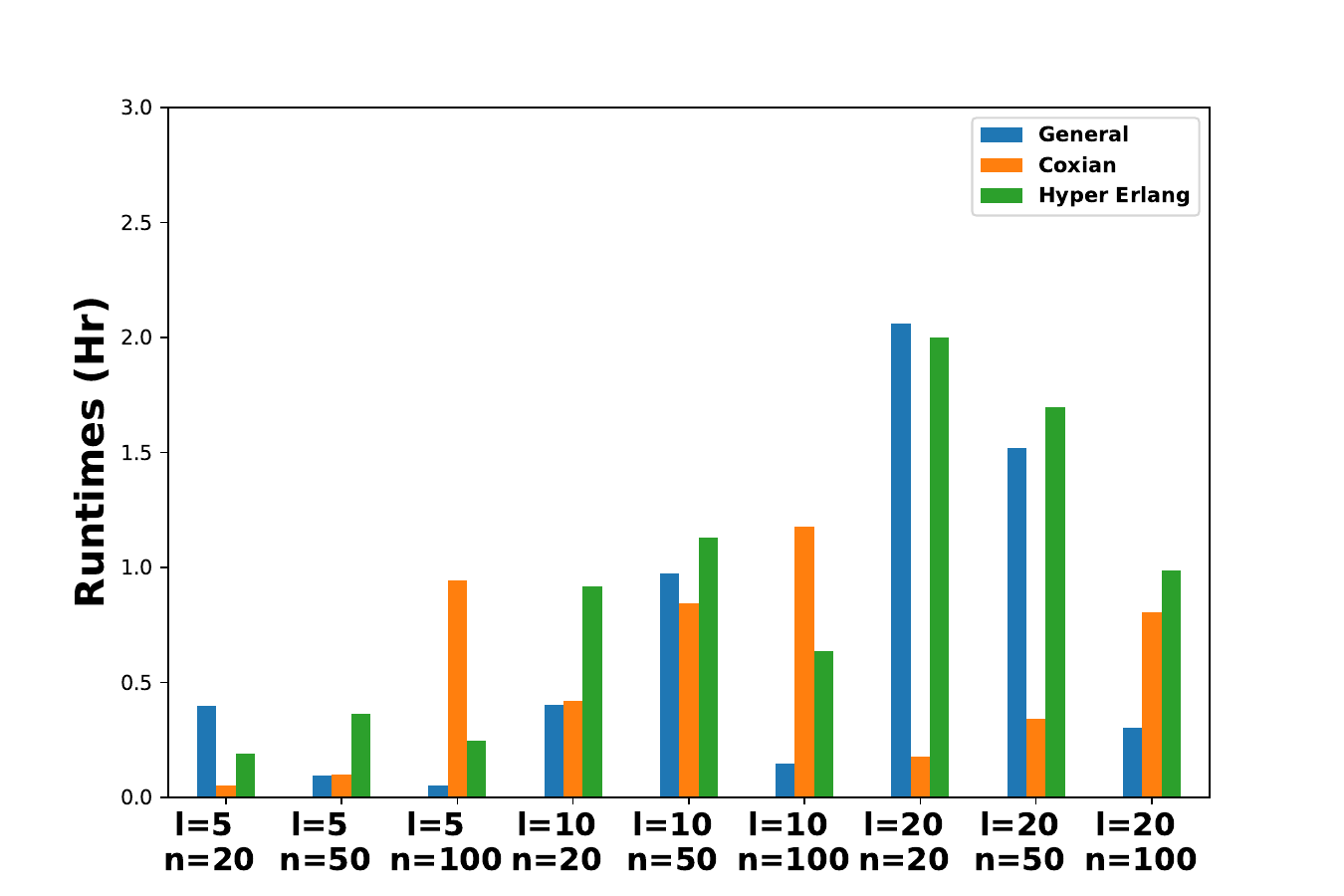}
        \caption{Coxian data set}
        \label{fig:runtimes_cox}
    \end{subfigure}
    \hfill
    % Third image
    \begin{subfigure}{0.3\textwidth}
        \centering
        \includegraphics[width=\textwidth]{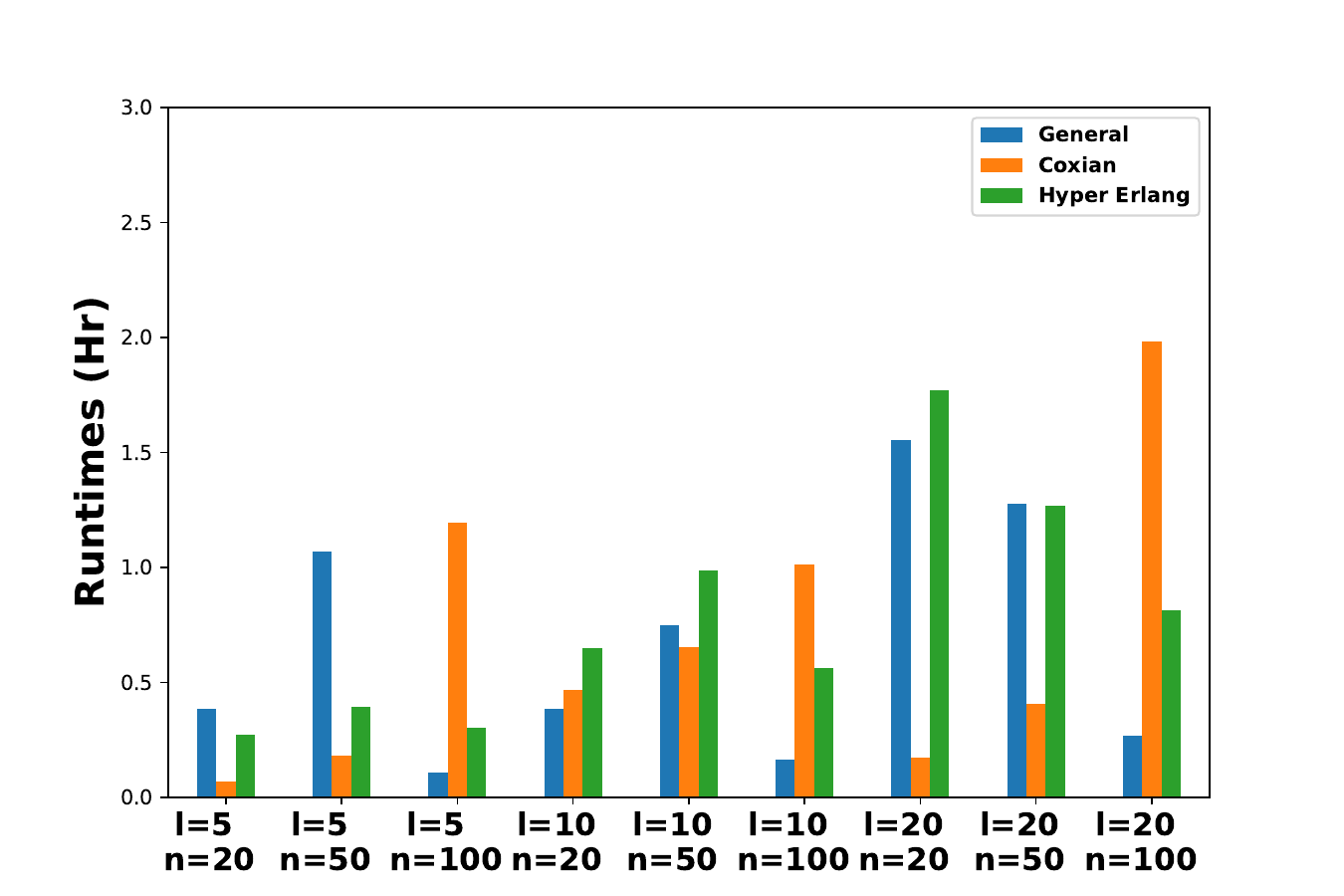}
        \caption{Hyper-Erlang data set}
        \label{fig:runtimes_hyp}
    \end{subfigure}

    % Overall caption
    \caption{Mean run time [Hr] for fitting of each test dataset and each experimental setting.}
    \label{fig:runtimes}
\end{figure}

\section{Joint fitting of moments and shape}
\label{sec:shape}

The method presented in this paper is not restricted to moment fitting (see also Section~\ref{sec:beyond_moms}). Any differentiable property of the Markovian PH representation can be added to the optimization procedure without changing anything essential in the method. Hence, also the shape of the distribution can be fitted simultaneously with the moments. For this purpose, it is required to adjust the objective function to include this component. The shape can be fitted via the PDF or CDF. %However, since no individual point in the PDF should normally matter very much, it is more natural to use points on the CDF for this purpose. 
To avoid particular dependency on the shape of the PDF and the considered points of fitting, we demonstrate the joint fitting ability of the proposed procedure using the points of the CDF. 

Suppose that as well as the list of moments, one is interested to obtain a PH distribution such that a set of points $\{(x_j, y_j)\} \in CDF$ ($j=1,\ldots,J$), meaning %the integral of the PDF until $x_j$ is $y_j$ (
the $y_j$-th quantile of the distribution is $x_j$,
%) 
are approximated as well. In this case the goal function of the combined optimization problem is:

\begin{equation}
\begin{split}
\min_{\bm{a}, \bm{\gamma}, \bm{Z}} \quad & \sum_{i=1}^{l} w_i \left( i! (-1)^i \;\; \pi_{\alpha}(\bm{a}, \bm{\gamma}, \bm{Z}) \;\; \pi_{T}(\bm{a}, \bm{\gamma}, \bm{Z})^{-i} \;\; \mathds{1}_n \;\;- \;\; m_i \right)^2 \\
& + Q \sum_{j=1}^{J} \left( \left(1 - \pi_{\alpha}(\bm{a}, \bm{\gamma}, \bm{Z}) e^{\pi_{T}(\bm{a}, \bm{\gamma}, \bm{Z}) x_j} \mathds{1}_n \right) \;\;- \;\; y_j\right)^2
\end{split}
\label{eq:opt-unconstrained-shape}
\end{equation}

\noindent where $Q$ is a trade-off parameter for the two parts of the objective. We note that it may also be desirable to add individual weights to the points on the CDF. The rational for having individual weights for the moments in our experiments is that these are often of very different scale, and using $w_i = m_i^{-2}$ allows to treat them equally with respect to penalizing relative square deviation. the CDF values on the other hand are all in the limited range of $[0, 1]$. With this formulation, nothing more needs to be done, except for the same gradient descent procedure on the unconstrained parameters. 

For illustration, we show an example of fitting the first five moments and several quantiles of a distribution. We apply the method while varying the number of quantiles added to the objective. We expect that the higher the number of quantiles used, the closer the fitted distribution will be to the target in shape. We apply the moment-shape fitting under four different settings. The first is moment fitting only. Next, we add 3, 5, or 20 different quantiles along the CDF. When fitting 3 and 5 percentiles we used the $\{30,50,70\}$ and $\{10,30,50,70, 90\}$ percentiles, respectively. For 20 percentiles, in order to increase the shape similarity, we put more weight on small percentiles, as this appears to be more effective. In particular, we took the first 16 fitted percentiles from the range $[1, 25]$ %at fixed intervals
uniformly, and the additional 4 percentiles are $\{30,40,50,60\}$. The value of $Q$ for this example is 0.05.

For each attempt to fit the PH distribution we plot it against the original PH, and also compute the  KL-divergence between the two. Figure~\ref{fig:shape_fits} shows the original PH in the solid blue line. It can be seen that fitting only the moments (the red dashed line '0 points') results in a very different distribution, albeit with very similar first $5$ moments. The PH fitted with 20 CDF percentiles ('20 points') is clearly the most similar to the original. The KL-divergence values are 0.099, 0.057, 0.056, and  0.004, when fitting with 0,3,5, and 20 percentiles, respectively. The KL-divergence is based on the distribution (PDF) and the moments have only an indirect effect on the KL-divergence, which makes these KL-divergence results reasonable. In all cases, the first 5 moments are fitted successfully such that the maximum relative error from any of the $5$ moments is less than $1\%$. However, fitting CDF values along moments does decrease the moment fitting accuracy, where the maximum moment error when they are jointly fitted with 0, 3, 5, and 20 CDF points is 0.020\%, 0.188\%, 0.330\%, 0.616\%, respectively. 

%\mt{and what was $m$ in these fitting attempts? have you used the same $m$ for all attempts? } \Eliran{it is now denoted by Q and it 0.05. The same $Q$ was used. }

%\mt{the current presentation suggests that fitting CDF additionally to the moments comes with no cost on the accuracy of the moments. I think we need to present the moments together with the PDF.} \Eliran{There is a monotone error decrease in the moment accuracy. It is now mentioned in the text.}

This demonstration indicates that the proposed method holds strong potential for accurately fitting the shape of the distribution as well as target moments. To achieve effective shape matching, several key aspects merit further investigation in future research. For instance, one should study how many percentiles to match and how to determine their specific values. Additionally, it may be worthwhile to explore fitting over the PDF instead of the CDF, or even employing a hybrid optimization strategy that simultaneously fits the CDF, PDF, and moments. Comparing our approach for shape fitting of PH distributions with existing methods would also be of interest. Given a sample trace, one can apply the EM algorithm~\cite{1467845}; while this method is known for its high accuracy, it becomes computationally expensive for large PH distributions\footnote{The runtime is also determined by other factors, such as the number of samples or the PH structure. Runtimes can be reduced when applying the EM algorithm over special PH structures such as Hyper-Erlang~\cite{1673383}}.  

In contrast, in our example, we could fit a PH distribution of size 80 in under an hour—an instance that would be impractical for the EM algorithm.

%\mt{i think it would worth to try to fit a given number of points of the PDF as well?} \Eliran{We did fit some PDF in several experiments. It works fine, but we decided not to include this because we only aim to demonstrate that it can fit beyond moments. We justified in the text why we present only CDF. }

%\mt{The cost of EM fitting depends not only on the PH order but also on the PH structure and the number of samples. Some of the special structures (e.g., hyper erlang) are cheaper to fit (see e.g. G-FIT in A. Thümmler, P. Buchholz, and M. Telek. A novel approach for phase-type fitting with the EM algorithm. IEEE Trans. on Dependable and Secure Computing, 3(3):245--258, 2006)}\Eliran{Thank you for this comment. I included this above in a footnote.}

\begin{figure}[htbp]
  \centering
  \includegraphics[width=0.6\textwidth]{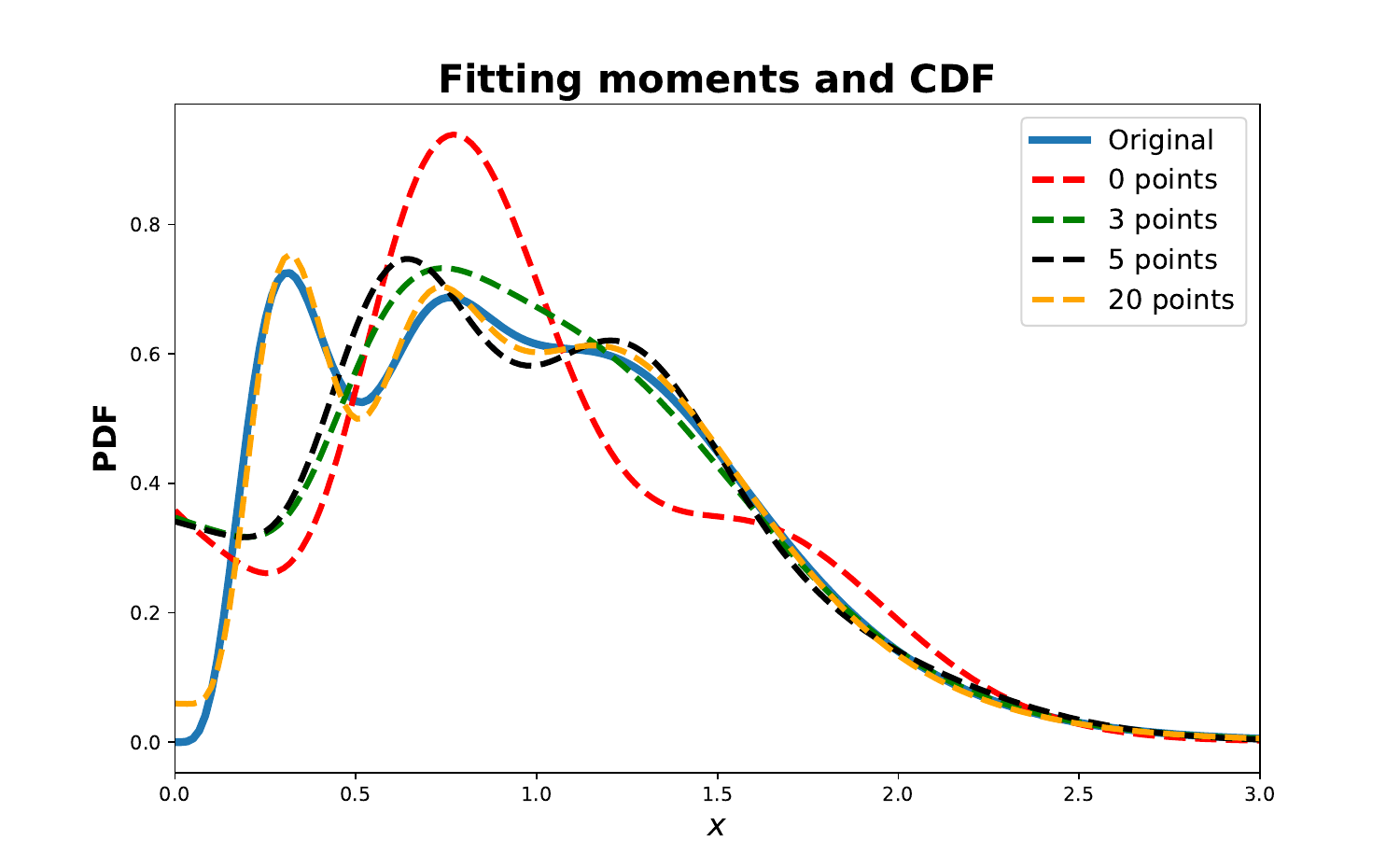}
  \caption{Shape fitting.}
  \label{fig:shape_fits}
\end{figure}

\section{A Queueing Application}\label{sec:queue}

This section demonstrates the importance of moment fitting in queueing applications. For this purpose, we analyze the stationary number of customers in the system in a $GI/GI/1$ queue. We analyze a case study where we approximate the moments of inter-arrival and service time, but not their distributions. We fit each one with its corresponding PH distribution, which is then used to analyze $PH/PH/1$ approximation of the  $GI/GI/1$ queue. 

In this example, we derive the stationary distribution when approximating the first 2, 3, 4 and 5 moments of inter-arrival and service time, and then compare it to the ground truth. To compute the stationary distribution of the $PH/PH/1$, we use the Quasi-Birth and Death (QBD) method. QBD allows us to compute the stationary number of customers in the system in a single station queue where both the inter-arrival and service time distributions follow a PH distribution (see section 21.4 in~\cite{Harchol2013}).  

This evaluation is conducted in a realistic setting, where only a stream of timestamped observations—such as arrivals, service commencements, and service completions—is available, reflecting the type of event-log data commonly encountered in real-world scenarios. Given the data, one can estimate the moments using various statistical methods. 

This example assumes the true 2 to 5 first inter-arrival and service time moments are known. Of course, in real-life scenarios, such information can only be estimates, which will induce accuracy loss. Yet, we use the true moments here in order to show the connection between the number of moments used and the stationary distribution accuracy under 'lab-conditions'.

\begin{figure}[ht]
    \centering
    \begin{subfigure}[b]{0.48\textwidth}
        \centering
        \includegraphics[width=\textwidth]{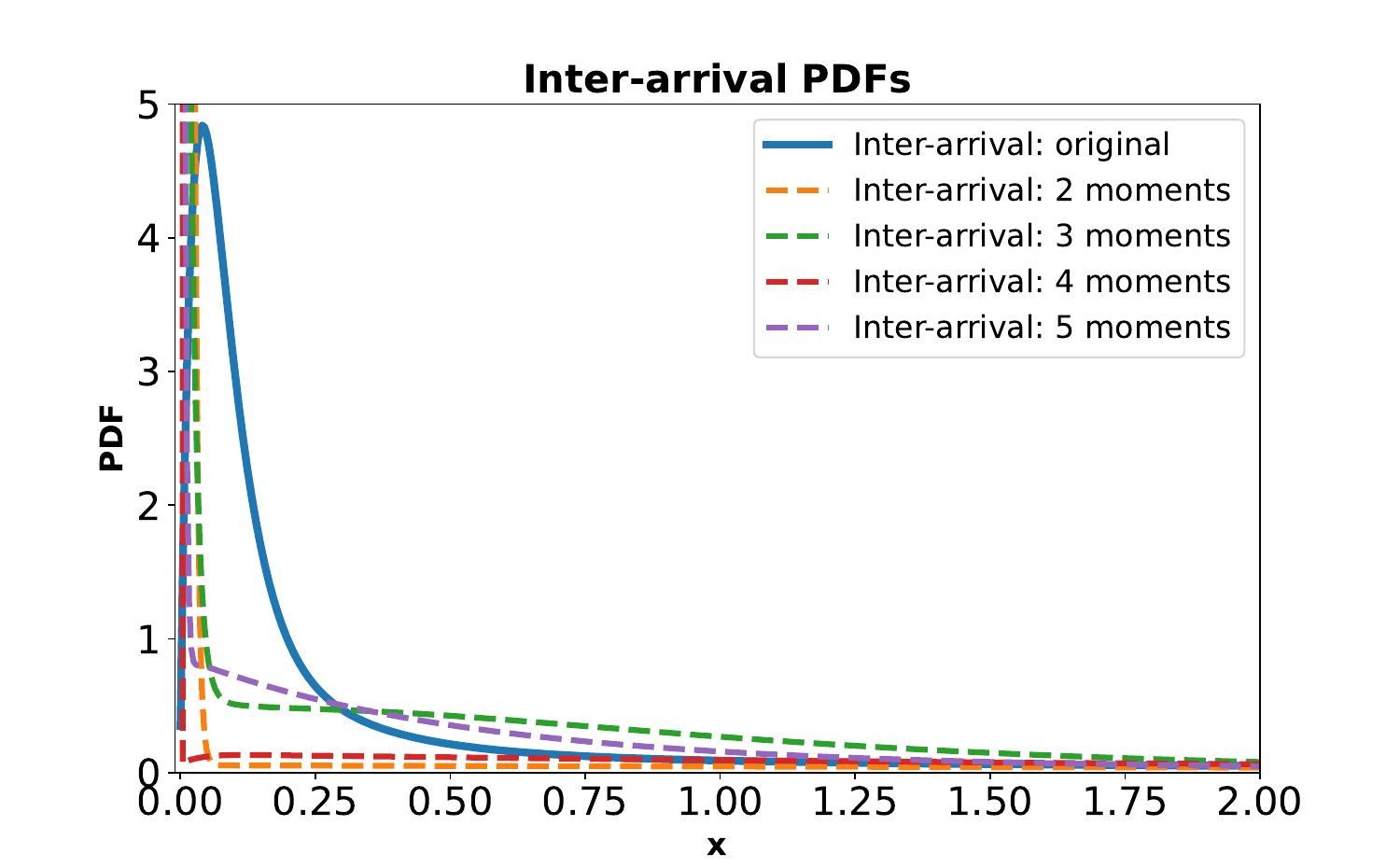}
        \caption{Inter-arrival  distribution.  }
        \label{fig:diff_arrive_times}
    \end{subfigure}
    \hfill
    \begin{subfigure}[b]{0.48\textwidth}
        \centering
        \includegraphics[width=\textwidth]{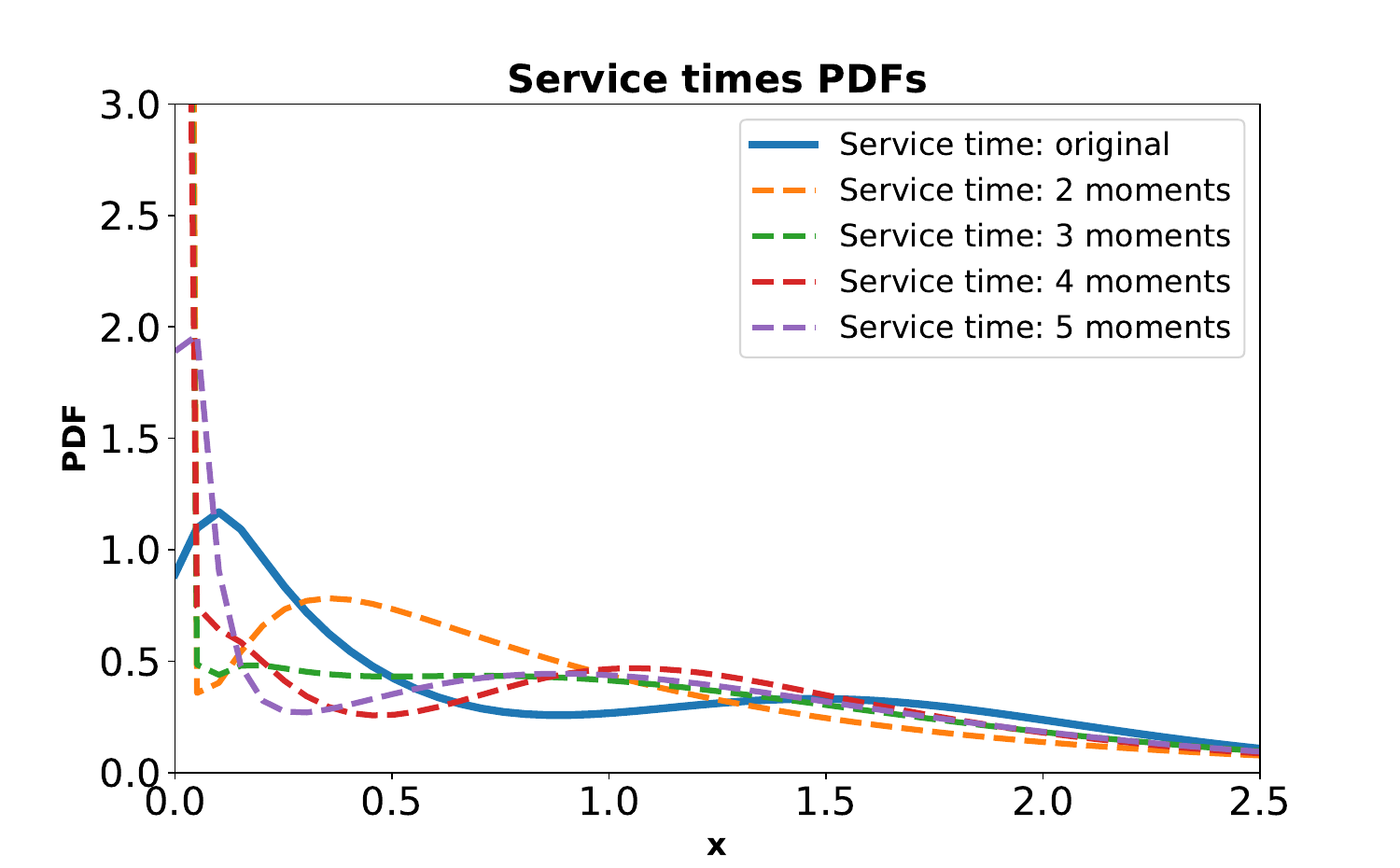}
        \caption{Service time distributions}
        \label{fig:diff_ser_times}
    \end{subfigure}
    \caption{$GI/GI/1$ analysis.}
    \label{fig:queueing}
\end{figure}

The inter-arrival and service distributions are presented in Figures~\ref{fig:diff_arrive_times} and~\ref{fig:diff_ser_times}, respectively. We present not only the original distributions (marked in a solid blue line), but also the fitted distributions based on the first 2 to 5 moments (marked in the dashed lines). As Figures~\ref{fig:diff_arrive_times} and~\ref{fig:diff_ser_times} demonstrate, it appeared that these are all completely different $GI/GI/1$ systems. Furthermore, in these settings, the utilization level is set to be 0.7.  
The Figure depicts the true distribution (marked as 'Original' in the figure) against estimated distributions, where the inter-arrival and service time distributions are fitted via the first 2, 3, 4, and 5 moments (marked as '2 fitted moments', '3 fitted moments', '4 fitted moments', and '5 fitted moments' respectively). The results shown in this particular example suggest a decreasing marginal effect as we use more moments. Such a phenomenon is not surprising~\cite{sherzer23}. However, our method allows us to quantify the moment's impact on the queue statistics measures. 

In Figure~\ref{fig:gg1}, we present the absolute error of the PMF of the number of customers in the system (up to 10 customers for visualization purposes), where ``$i$ fitted moments'' mean that both the inter-arrival and the service time are fitted based on the first $i$ moments. 

We propose the following error metric to examine the accumulated error. Let $p_i$ and $\hat{p}_{i,l}$ be the actual and estimated probability of having $i$ customers in the system, respectively, where $l$ represents the number of moments used. We wish to compute the accumulated absolute errors for each value of the number of customers, where the value at $\infty$ is equivalent to the Wasserstein-1 distance. 
Formally, 

\begin{align*}
    Accumulated \thinspace Error(j, l) = \sum_{i=0}^j |p_i - \hat{p}_{i,l}|
    \end{align*}

This measure allows us to evaluate the cumulative distance between the true distribution and the one computed via the fitted inter-arrival and service time distributions.  In Figure~\ref{fig:gg1_errors} we present the $Accumulated \thinspace Error(j, l)$ for $j \geq 0$,  for $l \in \{2, 3, 4,5\}$. As the graph depicts, incorporating more moments generally improves accuracy, though the marginal benefit diminishes with higher-order moments. Notably, the error becomes nearly negligible when the first five moments are utilized.

This compelling case study opens the door to promising future research directions. Leveraging our method, one can conduct an in-depth analysis of stochastic processes to examine the influence of higher-order moments—specifically, the $i^{\text{th}}$ moment—of the underlying distributions. While our demonstration focuses on a $GI/GI/1$ queue, the approach is readily extendable to other queueing systems and broader classes of stochastic processes, such as inventory management models. The strength of fitting PH distributions lies not only in their flexibility—enabling approximation of a wide range of distributions—but also in their Markovian structure. This allows researchers to apply existing analytical results (e.g., for the $PH/PH/1$ queue) or to efficiently simulate the system, facilitating the analysis of virtually any stochastic model.   

% \mt{this is a very nice example, which made me very curious: more moment fitting results (visually) less accurate PDF fitting in fig 9. how about to fit say 3 moments and a given number of PDF points, compared to fitting 5 moments and no PDF points? which one is more accurate? }

\begin{figure}[htbp]
\begin{minipage}{0.48\textwidth}
\includegraphics[width=\textwidth]{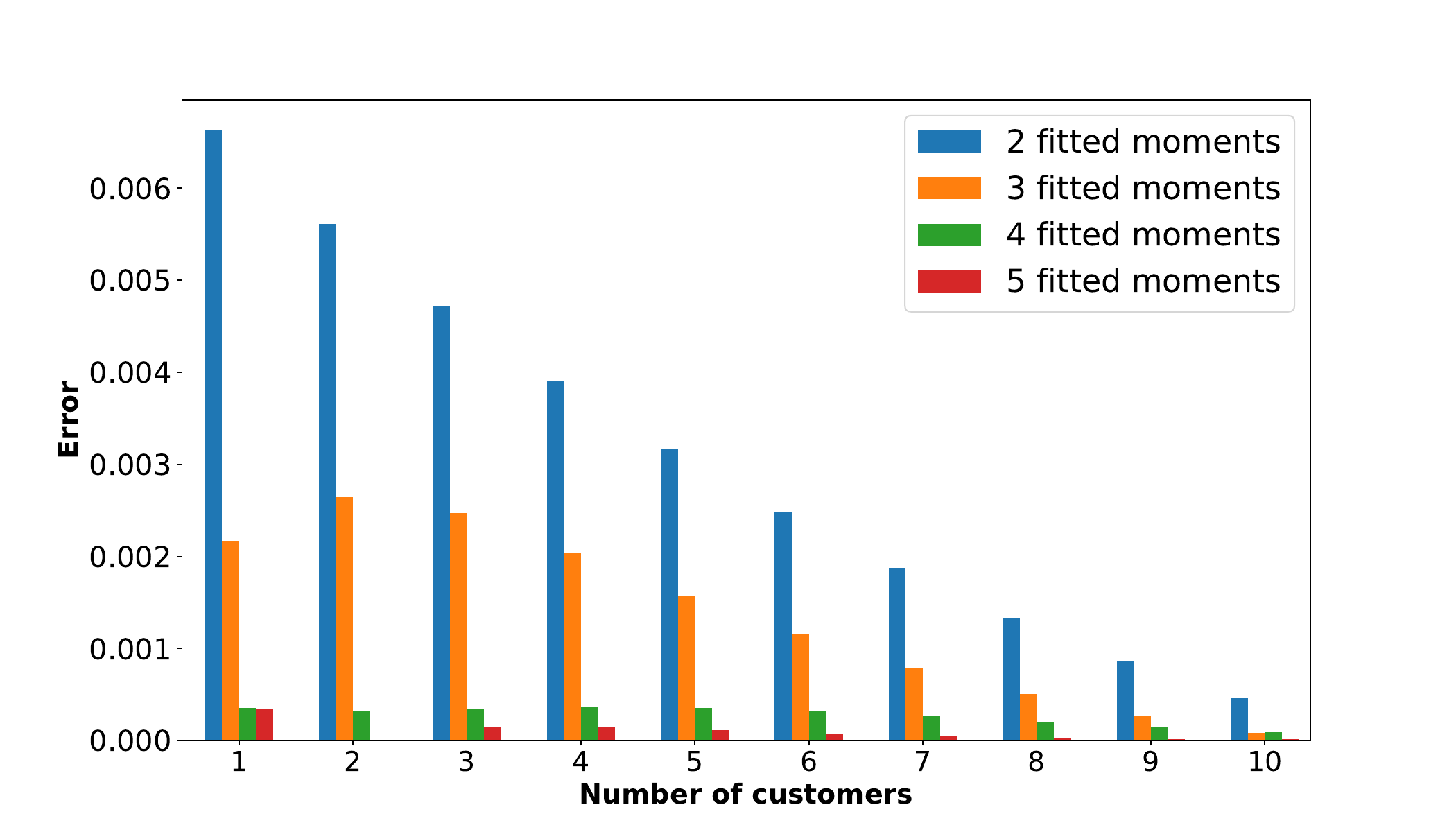}
  \caption{Absolute errors of steady-state probabilities. }
  \label{fig:gg1}
\end{minipage}
\hfill
\begin{minipage}{0.48\textwidth}
\includegraphics[width=\textwidth]{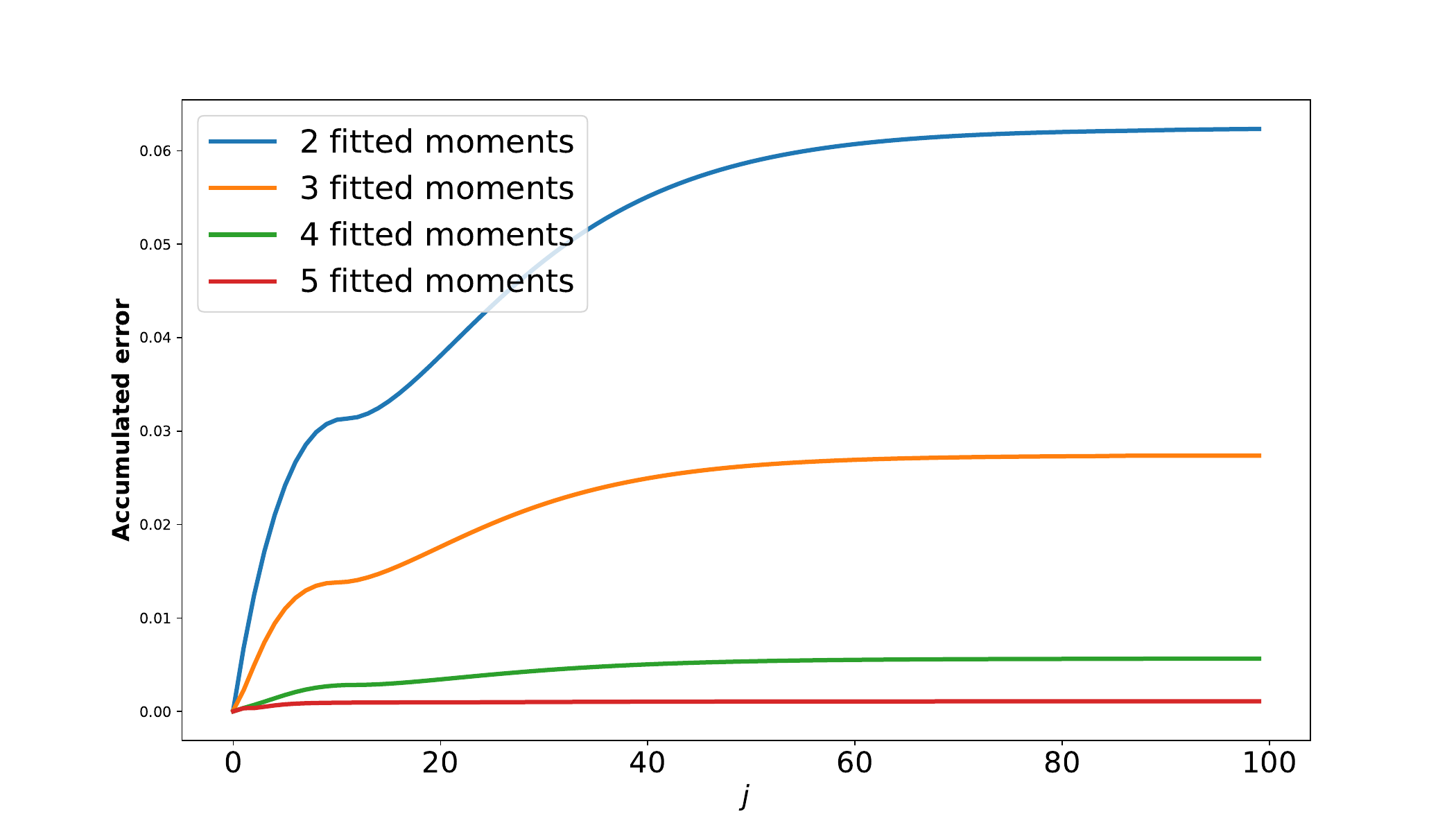}
\caption{Accumulated error of moments fitting approximation as a function of $m$ and $j$.
}
  \label{fig:gg1_errors}  
\end{minipage}
\end{figure}

\section{Conclusions}\label{sec:Conclusions}

The problem of finding a distribution that matches a set of prescribed moments arises in many modeling and simulation situations. PH distributions or subclasses thereof, such as Coxian and Hyper-Erlang, are often used as the search space, due to their generality and favorable analytical properties. While previous studies have suggested many methods appropriate for matching or fitting a small number of moments and for small PHs or special sub-classes of PH, in this paper we study the problem of fitting potentially large PHs (demonstrated up to $100$), while approximating as many as $20$ moments. 

To this end, we formulate the weighted regression optimization problem of moment fitting. By the basic properties of PH distributions this is a constrained matrix-polynomial optimization problem that does not have any direct scalable solution approach we are aware of. In order to make the problem tractable we propose a re-parametrization of the PH distribution in an unconstrained space. The mapping between the two representations is differentiable, allowing a gradient descent optimization procedure on the weighted moment fitting objective in the new unconstrained representation, and guaranteeing that the result is a valid PH. Likewise, we propose two additional re-parametrization similar in nature, for the Coxian and Hyper-Erlang subsets of the general PH.

Results of numeric experiments on a diverse dataset of moments show the method is generally able to fit sequences of up to $20$ moments to within $<1\%$ error from the worst one, using PH distributions of up to size $100$. To the best of our knowledge this is the first method to obtain such results. 

Overall, as expected, success rates are decreasing with number of required moments and increasing with PH size. Run time, on the other hand, reflects a trade-off where for larger PHs optimization is easier as there are presumably more parameter combinations that are appropriate for a given target moment sequence, but on the other hand each atomic computation is slower due to the size of the subgenerator matrix.

A limitation of the current approach is that it does not obtain a minimal size PH. This property is sometimes considered beneficial, and future work may include adaptations of the current method that to obtain minimal size. Other promising directions for future work include the application of the re-parametrization and weighted regression framework to distribution (shape) fitting, possibly in conjunction with moment fitting.

\bibliographystyle{ACM-Reference-Format}
%%% -*-BibTeX-*-
%%% Do NOT edit. File created by BibTeX with style
%%% ACM-Reference-Format-Journals [18-Jan-2012].

% \bibliography{lib}

\clearpage
\appendix
\section{%Appendix A: 
Evaluation of previous methods}
\label{sec:appA}

In this section, we evaluate the optimization method proposed in ~\cite{buchholz2009heuristic}\footnote{The authors thank the help of Peter Buchholz for providing the source code of the procedure.} as a function of a number of moments and PH size (see also a brief description of the method in Section \ref{sec:liter}). In order to evaluate the method we sampled $100$ examples from the Coxian test dataset used in the evaluation of our method (see Section \ref{sec:experiment}). Moment fitting for each example was attempted for each combination of the number of moments $l \in \{5, 10, 20\}$, and PH size $ n \in \{10, 20, 50 \} $. The max number of iterations was set to $100,000$, and all other parameters were left at the default value. For each example and combination of number of moments and PH size, the run is considered a success if all moments of the resulting fitted PH distribution fall within $1\%$ of the targets. In Table~\ref{tab:evaluate-appendix} we report the percent success for each combination, which shows much lower success rate than the related results in Figure \ref{fig:cox1}. 

%\mt{i am surprised, i assumed that the method in [4] performs better. i guess these results should be compared with fig 4b (even order 10 is not provided  in fig 4b) where 100\% success rate is reported. i would like to understand the reason for this big difference in accuracy. btw it would be nice if the number of moments and the number of phases are denoted with the same letter in both cases. ($M,N\to n,k$) }\Hezi{I would guess it's a numerical stability issue for the larger sizes. We had the same problem initially and used a bunch of numerical tricks to get it to work.}

%Notably, for large number of moments (above 10) and large PH sizes (above 20) the method never resulted in a successful run. 

\begin{table}[hb]
\centering
\begin{tabular}{l|lll}
\toprule
l \textbackslash{} n & 10   & 20   & 50  \\ \hline
5                  & 43\% & 28\% & 0\% \\
10                 & 27\% & 0\%  & 0\% \\
20                 & 0\%  & 0\%  & 0\% \\ 
\bottomrule
\end{tabular}
\caption{Percent of a random 100 examples from the Coxian test resulting in a fit within $1\%$ of each moment, for each combination of number of moments (rows) and PH size (columns).}
\label{tab:evaluate-appendix}
\end{table}

\section{%Appendix B: 
Model settings}
\label{sec:appB}

In this appendix we complete the details not provided in the main text of the implementation used to obtain the results reported in this paper. The experimental setting of the parameters presented in this section are resulted from the accuracy -- computational complexity trade off learned from to our numerical experiments. A more established optimization of these parameters is the subject of future research. 

\paragraph{Hyper-Erlang block sizes:} 
For the Hyper-Erlang re-parametrization experiments, for each value of the overall size $n \in \{20, 50, 100\}$ there is an additional choice of the block sizes $\{d_1, ...,d_k\}$ (the sum of which must be exactly $n$). In all experiments reported in this paper we use the following values. For $n=20$, we use $\{3, 4, 6, 7\}$. For $n=50$, $\{3, 4, 6, 7, 8, 10, 12\}$. Lastly, for $n=100$, $\{3, 4, 6, 7,8, 10, 10, 10,10, 12, 20\}$. 

\paragraph{Early termination of processes:} In the gradient descent optimization procedure we begin in all cases from $10,000$ starting points. However, in order to reduce the computation burden we terminate most of the processes early, by keeping only the best ones according to their objective function values. This is done according to the schedule reported in Table~\ref{tab:scheduler}. The benefit of this approach is that by sampling many initial points we are able to increase the likelihood of starting from points with a good convergence trajectory for the target moments. In this paper we do not aim to optimize this schedule but future work should examine the tradeoffs between the computation burden of many initial processes and the benefit from the multiple trials.

\begin{table}[!htp]
\centering
\caption{The schedule for number of processes kept running at each step of the Gradient Descent multi-process optimization procedure.}\label{tab:scheduler}
%\scriptsize
\begin{tabular}{l|l}
\toprule
Step & \# processes kept \\ 
\hline 
1 & 10000 \\
500 &2000 \\
5000 &200 \\
15000 &20 \\
\bottomrule
\end{tabular}
\end{table}

\section{%Appendix C:
Notation
}
\label{sec:appC}

\begin{table}[hb]
\centering
\begin{tabular}{ll}
\toprule
notation & meaning \\
\hline
$\bm{\alpha}$  &  initial probability vector for general a PH distribution. \\ 
$\bm{T}$       &  subgenerator matrix for a general PH distribution \\
$n$            &  number of phases in the PH, that is $\bm{T}  \in \mathbb{R}^{n \times n}$. \\ 
$l$            & number of moments to fit \\ 
$m_i$          & The $i$-th moment of a PH distribution \\
$\mathds{1}_n$ & the length $n$ vector of ones \\ 
$\bm{I}_n$     & order $n$ identity matrix \\ 
$(\bm{a}, \bm{\gamma}, \bm{Z})$ & parameters of the general PH reparameterization \\ 
\hline
$(\bm{\lambda}, \bm{p})$ &  parameters for the Coxian distribution \\
$(\bm{\gamma}, \bm{u})$ & parameters of the Coxian reparameterization. Notice $\bm{\gamma}$ here plays the same role as in the general  reparameterization above. \\ 
\hline 
$(\bm{\omega}, \bm{\lambda})$ & parameters for the Hyper-Erlang distribution. Notice $\bm{\lambda}$ plays the same role as in the Coxian case \\ 
$k$  & number of blocks in a Hyper-Erlang PH. \\ 
$(\bm{\beta}, \bm{\delta})$ & parameters of the Hyper-Erlang reparameterization. \\

\bottomrule
\end{tabular}
\caption{A concise summary of the main notation used in this paper.}
\label{tab:notation}
\end{table}

\end{document}